\documentclass[11pt]{article}

\usepackage{amsmath,amssymb}
\usepackage{amsthm} 
\usepackage{newtxtext}
\usepackage{newtxmath}

\usepackage[T1]{fontenc}
\usepackage[utf8]{inputenc}
\usepackage[margin=1in]{geometry}

\newtheorem{theorem}{Theorem}
\newtheorem{lemma}{Lemma}
\newtheorem{proposition}{Proposition}

\theoremstyle{remark}
\newtheorem{remark}{Remark}

\title{A note on bistability of a two-gene competitive system}
\author{Eduardo D. Sontag%
\thanks{This work was partially supported by grants AFOSR
FA9550-21-1-0289 and FA9550-22-1-0316.}%
\thanks{Department of Electrical and Computer Engineering and Department
of Bioengineering, Northeastern University, Boston, Massachusetts 02115,
USA.  Email: \texttt{e.sontag@northeastern.edu}}}
\date{}

\begin{document}

\maketitle

\begin{abstract}
Self-regulation together with mutual promoter competition provides a
simple mechanism for bistability in gene-regulatory models.  We study
a two-gene system with regulatory terms of Hill exponent one, allowing
distinct basal production rates and distinct degradation rates.  Each
gene product, when bound to its own promoter, may enhance or reduce
production relative to the basal rate, while the two products compete
through promoter occupancy.
We show that the system has at least one and at most three equilibria
in the positive quadrant.  Exactly two positive equilibria can occur
only if one nullcline intersection is degenerate; consequently, a
configuration in which all positive nullcline intersections are
transverse has either one or three positive equilibria.  If there are
exactly three distinct positive equilibria, then all three are
automatically hyperbolic: the two outer equilibria are asymptotically
stable nodes and the middle equilibrium is a saddle.  Moreover, every
positive solution converges to an equilibrium.  Hence the positive
quadrant is the disjoint union of the basins of attraction of the two
stable nodes and the one-dimensional stable manifold of the saddle,
yielding global bistability.
\end{abstract}

\section{Introduction}

Two-gene circuits with self-regulation and mutual competition provide a
particularly simple setting in which to study bistability.  In the
model considered here, the production rate associated with occupation
of a promoter by its own gene product may be either larger or smaller
than the basal production rate.  Thus the model includes, as special
cases, direct self-activation, no direct effect, and direct
self-repression upon self-binding.  Even though all binding terms have
Hill exponent one,
the system can have three positive steady states.  This occurs, for
example, in regimes with strong self-activation, and can also occur
without self-activation when cross-binding is sufficiently tighter
than self-binding; explicit examples are given in
Remark~\ref{rem:examples}.

The purpose of this note is to show that the mere existence of three
distinct positive equilibria already forces the expected bistable phase
portrait.  In particular, it is not necessary to solve explicitly for
the equilibria, nor is it necessary to impose a separate
nondegeneracy or transversality hypothesis.  There is also a useful
algebraic fact behind this conclusion: the system always has at least
one and at most three distinct positive equilibria.  If there are
exactly two, one of the two nullcline intersections is necessarily
degenerate.  Thus, if all positive nullcline intersections are
transverse, the only possibilities are one or three positive
equilibria.  This is the precise transversality sense in which the
two-equilibrium case is exceptional.

Consider the system
\begin{align}
\dot x_1
&=
\frac{\alpha_{01}+\alpha_1x_1/K_1}
{1+x_1/K_1+x_2/K_{21}}-\delta_1 x_1,
\label{eq:system1}\\
\dot x_2
&=
\frac{\alpha_{02}+\alpha_2x_2/K_2}
{1+x_2/K_2+x_1/K_{12}}-\delta_2 x_2,
\label{eq:system2}
\end{align}
where
\[
\alpha_{01},\alpha_{02},\delta_1,\delta_2,
K_1,K_2,K_{12},K_{21}>0,
\qquad
\alpha_1,\alpha_2\geq0.
\]
Here $\alpha_{0i}$ is the basal production rate for gene $i$, while
$\alpha_i$ is the production rate associated with occupation of its
promoter by its own gene product.  Thus $\alpha_i>\alpha_{0i}$
corresponds to direct self-activation upon self-binding,
$\alpha_i=\alpha_{0i}$ to no change in production upon self-binding, and
$\alpha_i<\alpha_{0i}$ to direct self-repression upon self-binding.
Since self-binding also displaces the product of the other gene from the
promoter, the net production rate of gene~$i$ need not be decreasing in
$x_i$, even when $\alpha_i<\alpha_{0i}$.  We use the convention that $K_{ij}$ is the
dissociation constant for binding of the product of gene $i$ to the
promoter of gene $j$; hence $K_{21}$ appears in the equation for
gene~1 and $K_{12}$ in the equation for gene~2.

We are interested in equilibria in the open positive quadrant
\[
\mathbb R_{>0}^2=\{(x_1,x_2):x_1>0,\ x_2>0\}.
\]

The main result is the following.

\begin{theorem}\label{thm:bistability}
Suppose that \eqref{eq:system1}--\eqref{eq:system2} has exactly three
distinct equilibria in $\mathbb R_{>0}^2$.  Then they can be labeled
\[
p^{(i)}=(x_1^{(i)},x_2^{(i)}),\qquad i=1,2,3,
\]
so that
\begin{equation}\label{eq:ordering}
x_1^{(1)}<x_1^{(2)}<x_1^{(3)},
\qquad
x_2^{(1)}>x_2^{(2)}>x_2^{(3)}.
\end{equation}
All three equilibria are nondegenerate.  The equilibria $p^{(1)}$ and
$p^{(3)}$ are hyperbolic asymptotically stable nodes, whereas $p^{(2)}$
is a hyperbolic saddle.

Moreover, every solution with initial condition in
$\mathbb R_{>0}^2$ is defined for all $t\geq0$ and converges to one of
these three equilibria.  More precisely, if
\begin{align*}
\mathcal B_i
&:=\left\{x^0\in\mathbb R_{>0}^2:
\lim_{t\to\infty}\phi_t(x^0)=p^{(i)}\right\},
\qquad i=1,3,\\
W^s(p^{(2)})
&:=\left\{x^0\in\mathbb R_{>0}^2:
\lim_{t\to\infty}\phi_t(x^0)=p^{(2)}\right\},
\end{align*}
where $\phi_t$ denotes the flow, then
\begin{equation}\label{eq:global-partition}
\mathbb R_{>0}^2
=\mathcal B_1\,\dot\cup\,W^s(p^{(2)})\,\dot\cup\,\mathcal B_3.
\end{equation}
The sets $\mathcal B_1$ and $\mathcal B_3$ are nonempty open basins of
attraction, and $W^s(p^{(2)})$ is the one-dimensional stable manifold
of the saddle.  Thus every positive solution except those on the stable
manifold of the saddle converges to one of the two asymptotically stable
equilibria.  In this sense the system is globally bistable.
\end{theorem}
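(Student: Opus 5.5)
The plan is to use three ingredients: the competitive (monotone) structure of the vector field, the geometry of the two nullclines as graphs of strictly decreasing functions, and a reduction of the equilibrium equations to a single polynomial equation of degree at most three. Write $f=(f_1,f_2)$ for the right-hand side and $D_1=1+x_1/K_1+x_2/K_{21}$, $N_1=\alpha_{01}+\alpha_1x_1/K_1$ (and analogously $D_2,N_2$). A direct computation gives $\partial f_1/\partial x_2=-N_1/(K_{21}D_1^2)<0$ and $\partial f_2/\partial x_1<0$ throughout $\mathbb R_{>0}^2$. Hence the system is competitive, and the change of variables $x_2\mapsto -x_2$ turns it into a cooperative system.

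\textbf{Step 1: invariance and boundedness.} On the boundary $\{x_1=0\}$ we have $\dot x_1=\alpha_{01}/D_1>0$, and similarly for $x_2$, so $\mathbb R_{>0}^2$ is forward invariant. Since $N_i/D_i\le\max(\alpha_{0i},\alpha_i)$, we get $\dot x_i\le \max(\alpha_{0i},\alpha_i)-\delta_ix_i$. Therefore every positive solution exists for all $t\ge0$ and eventually enters a compact box. By the classical result for planar competitive (equivalently cooperative) systems (Hirsch), every bounded forward orbit is eventually monotone in each coordinate. It therefore converges to an equilibrium, and since the $\omega$-limit set lies in $\mathbb R_{>0}^2$ by the boundary inequality, that equilibrium is positive.

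\textbf{Step 2: nullclines, ordering, and a cubic.} Solving $f_1=0$ for $x_2$ gives
\[
x_2=g_1(x_1):=K_{21}\Bigl(\tfrac{\alpha_{01}}{\delta_1x_1}+\tfrac{\alpha_1}{\delta_1K_1}-1-\tfrac{x_1}{K_1}\Bigr),
\]
which is strictly decreasing with $g_1'<0$. Likewise $x_1=g_2(x_2)$ is strictly decreasing. Consequently any two positive equilibria are strictly ordered in opposite directions in the two coordinates, which yields the labeling \eqref{eq:ordering}.

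Substituting $x_2=g_1(x_1)$ into $f_2=0$ and clearing the denominators $x_1$ gives a polynomial equation $P(x_1)=0$. I expect $P$ to have degree at most three, and its positive roots with $g_1(x_1)>0$ to correspond exactly to the positive equilibria. Three distinct such roots then forces all of them to be simple roots of $P$, so $P$ changes sign at each.

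\textbf{Step 3: local analysis.} Differentiating $f_1(x_1,g_1(x_1))=0$ gives $g_1'=-f_{1,x_1}/f_{1,x_2}$. Because $g_1'<0$ and $f_{1,x_2}<0$, it follows that $f_{1,x_1}<0$ at every point of the first nullcline, and similarly $f_{2,x_2}<0$ on the second. Hence $\operatorname{tr}J<0$ at each equilibrium. Moreover,
\[
\operatorname{tr}^2J-4\det J=(f_{1,x_1}-f_{2,x_2})^2+4f_{1,x_2}f_{2,x_1}>0,
\]
so the eigenvalues are real and distinct. The sign of $\det J=f_{1,x_2}f_{2,x_1}\bigl(\frac{f_{1,x_1}f_{2,x_2}}{f_{1,x_2}f_{2,x_1}}-1\bigr)$ is the sign of a comparison of the two nullcline slopes. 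Up to a fixed positive factor, this is the sign of the derivative of $x_1\mapsto g_1(x_1)-g_2^{-1}(x_1)$, i.e.\ the sign of $P'$ at the root. As $x_1\to0^+$, $g_1\to+\infty$ while the second nullcline stays bounded, which fixes the sign of $P$ near $0$. Simplicity and alternation of the three roots then give $\det J>0$ at $p^{(1)}$ and $p^{(3)}$, making them hyperbolic stable nodes, and $\det J<0$ at $p^{(2)}$, making it a hyperbolic saddle. In particular, all three equilibria are nondegenerate without any separate transversality hypothesis.

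\textbf{Step 4: global partition.} By Step 1, every positive orbit converges to one of the $p^{(i)}$, which gives the disjoint union \eqref{eq:global-partition}. The sets $\mathcal B_1,\mathcal B_3$ are open (basins of hyperbolic sinks) and nonempty. The points converging to $p^{(2)}$ form exactly its one-dimensional stable manifold, by the stable manifold theorem applied to the hyperbolic saddle.

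The main obstacle will be Step 2 and the sign bookkeeping in Step 3. I need to verify carefully that the reduced equation really has degree at most three, and that the sign of $\det J$ is tied to the sign change of $P$ with a positive proportionality factor, so that the alternation argument genuinely yields nodes at the outer equilibria and a saddle in the middle. I would first write $P$ explicitly and compute $\det J$ at a root in terms of $P'$.
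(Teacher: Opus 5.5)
There is a genuine gap in Step~2. Your outline otherwise matches the paper's: decreasing nullclines, a one-variable reduction, $\det J$ read off from the nullcline slopes, negative trace and positive discriminant, and Hirsch's theorem. The trouble is that Step~2 rests on a false expectation, and it is exactly the step meant to deliver nondegeneracy. Clearing denominators in $\dot x_2=0$ gives the conic $G(x_1,x_2):=\frac{\delta_2}{K_2}x_2^2+\frac{\delta_2}{K_{12}}x_1x_2+\bigl(\delta_2-\frac{\alpha_2}{K_2}\bigr)x_2-\alpha_{02}=0$. Since $x_1g_1(x_1)$ is quadratic in $x_1$, the polynomial $P(x_1):=x_1^2G(x_1,g_1(x_1))$ has $x_1^4$-coefficient a nonzero multiple of $\frac{1}{K_1K_2}-\frac{1}{K_{12}K_{21}}$. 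So $P$ is a quartic unless $K_1K_2=K_{12}K_{21}$. This is what actually happens when there are three equilibria. For $\alpha_{0i}=0.1$, $\alpha_i=10$, $\delta_i=K_i=1$, $K_{12}=K_{21}=0.2$, $P$ is proportional to $(40x_1^2-360x_1+1)(60x_1^2-90x_1-1)$, with three roots in $(0,a)$ and a fourth, negative one. For a quartic, three distinct roots may have multiplicities $2,1,1$ in some order. So ``three distinct roots forces simplicity'' fails, and with it your nondegeneracy and alternation conclusions.

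The missing idea is a parity argument that uses both ends of $(0,a)$, not only $x_1\to0^+$; here $a$ is the zero of $g_1$. Put $H:=g_1-g_2^{-1}$. Because $g_2(x_2)\to+\infty$ as $x_2\downarrow0$, $g_2^{-1}$ is defined and positive on all of $(0,\infty)$. Hence $H>0$ near $0$, while $H(x_1)\to-g_2^{-1}(a)<0$ as $x_1\uparrow a$. Therefore an odd number of the zeros of $H$ in $(0,a)$ have odd order. This rules out the split $(2,1,1)$ and makes all three zeros simple.

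To pass between root multiplicities of $P$ and orders of vanishing of $H$, you also need the factorization $G(x_1,g_1)=G(x_1,g_1)-G(x_1,g_2^{-1})=H\cdot\bigl[\frac{\delta_2}{K_2}(g_1+g_2^{-1})+\frac{\delta_2}{K_{12}}x_1+\delta_2-\frac{\alpha_2}{K_2}\bigr]$. Its bracket equals $G_{x_2}>0$ at each equilibrium, as you see by differentiating $G(g_2(x_2),x_2)=0$ and using $G_{x_1}>0$.

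Watch the sign as well: $\det J=-f_{1,x_2}f_{2,x_2}H'$ with $f_{1,x_2}f_{2,x_2}>0$. So $\det J>0$ exactly where $H'<0$, i.e.\ at the outer equilibria. Some algebraic input of this kind is indispensable: Section~\ref{sec:general} exhibits a competitive system with decreasing nullclines and the same endpoint signs whose two outer equilibria are degenerate and unstable.

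Your Step~1 is acceptable if you apply Hirsch's theorem on a rectangle slightly larger than the closed quadrant, where competition persists because $\alpha_{0i}>0$, and note that there are no equilibria on the axes. The paper instead proves explicit lower bounds keeping orbits away from the axes.
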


The proof has two ingredients.  First, the two nullclines have a very
restricted geometry: each is a strictly decreasing graph, and their
intersection condition reduces to a polynomial of degree at most four.
The endpoint signs of the two graphs then give a parity constraint on
the possible intersections.  Second, the system is strongly
competitive and every positive orbit has compact closure in the open
positive quadrant, so the planar convergence theorem for competitive
systems applies.

In Section~\ref{sec:general} we examine the role of the degree bound.
Under purely qualitative hypotheses (strong competition, decreasing
nullclines, and the endpoint signs), three positive equilibria lead to
one of two alternatives (Theorem~\ref{thm:general}).  Either the two
nullclines cross at all three equilibria, in which case the
configuration of Theorem~\ref{thm:bistability} follows whenever the
equilibria are nondegenerate, or they cross at only one of them and
touch without crossing at the other two, which are then degenerate.  An
explicit polynomial example shows that the second alternative does
occur and that bistability then fails; for
\eqref{eq:system1}--\eqref{eq:system2}, the degree-four bound is what
excludes it (Proposition~\ref{prop:special-case}).

Models with this type of competitive promoter structure have appeared
in the literature on genetic switches.  In particular, Kepler and
Elston~\cite{KeplerElston2001} considered a two-gene mutual-repressor
system in which the two transcription factors compete for a shared
operator.  In the deterministic fast-binding limit, this leads to
production terms having the same competitive-denominator structure as
those considered here.  Their model assumes dimeric binding, and hence
quadratic rather than linear dependence on the transcription-factor
concentrations.

A particularly close connection is provided by the ``exclusive
switch'' model of Lipshtat et al.~\cite{Lipshtat2006}.  With monomeric
binding, its deterministic rate equations can be written
\[
 \dot x_1
 =r\,\frac{1+kx_1}{1+kx_1+kx_2}-d x_1,
 \qquad
 \dot x_2
 =r\,\frac{1+kx_2}{1+kx_1+kx_2}-d x_2.
\]
Thus this model is the symmetric special case
\begin{gather*}
 K_1=K_2=K_{12}=K_{21}=1/k,\\
 \alpha_{01}=\alpha_{02}=\alpha_1=\alpha_2=r,\qquad
 \delta_1=\delta_2=d
\end{gather*}
of \eqref{eq:system1}--\eqref{eq:system2}.  Importantly, however, this
deterministic special case has exactly one positive equilibrium for
every choice of parameters.  The bistability found for the exclusive
switch without cooperative binding is a stochastic effect, not
deterministic bistability of these rate equations; see
\cite{Lipshtat2006,Loinger2007} and
Remark~\ref{rem:exclusive-monostable} below.

The well-known synthetic genetic toggle switch of Gardner, Cantor, and
Collins~\cite{GardnerCantorCollins2000} is related in spirit but is
described by a different class of equations.  Its standard
deterministic model uses mutual-repression Hill functions, for example
\[
 \dot u=\frac{\rho_1}{1+v^\beta}-u,
 \qquad
 \dot v=\frac{\rho_2}{1+u^\eta}-v,
\]
where the Hill exponents represent cooperative repression.  Its
production terms therefore have neither the common competitive
denominators nor the self-regulatory numerator terms that are central
to \eqref{eq:system1}--\eqref{eq:system2}.  Thus, although that work is
an important biological precedent for bistable two-gene toggle
switches, its mathematical model is not a special case of the system
studied here.

\section{Nullclines and the number of positive equilibria}

Write the vector field as
\[
\dot x_1=f(x_1,x_2),\qquad
\dot x_2=g(x_1,x_2).
\]

\begin{lemma}\label{lem:nullclines}
Define, for $x_1>0$,
\begin{equation}\label{eq:h-def}
h(x_1):=
K_{21}\left(
\frac{\alpha_{01}}{\delta_1 x_1}
+\frac{\alpha_1}{\delta_1 K_1}
-1-\frac{x_1}{K_1}
\right).
\end{equation}
Then $h$ is $C^\infty$ and strictly decreasing,
\begin{equation}\label{eq:h-prime}
h'(x_1)
=-K_{21}\left(
\frac{\alpha_{01}}{\delta_1 x_1^2}+\frac{1}{K_1}
\right)<0,
\end{equation}
and
\[
\lim_{x_1\downarrow0}h(x_1)=+\infty,
\qquad
\lim_{x_1\to\infty}h(x_1)=-\infty.
\]
Consequently, $h$ has a unique zero $a>0$, and the positive part of
the first nullcline is precisely the graph
\[
x_2=h(x_1),\qquad 0<x_1<a.
\]

Similarly, define, for $x_2>0$,
\begin{equation}\label{eq:k-def}
k(x_2):=
K_{12}\left(
\frac{\alpha_{02}}{\delta_2 x_2}
+\frac{\alpha_2}{\delta_2 K_2}
-1-\frac{x_2}{K_2}
\right).
\end{equation}
Then $k$ is $C^\infty$ and strictly decreasing, has a unique zero
$b>0$, and the positive part of the second nullcline is
\[
x_1=k(x_2),\qquad 0<x_2<b.
\]
Moreover, $k:(0,b)\to(0,\infty)$ is a $C^\infty$ bijection with
nonvanishing derivative.  Its inverse
\begin{equation}\label{eq:ell-def}
\ell:(0,\infty)\longrightarrow(0,b)
\end{equation}
is therefore $C^\infty$ and strictly decreasing.

The positive equilibria are in one-to-one correspondence with the
zeros in $(0,a)$ of
\begin{equation}\label{eq:H-def}
H(x_1):=h(x_1)-\ell(x_1).
\end{equation}
Furthermore,
\begin{equation}\label{eq:H-end-signs}
\lim_{x_1\downarrow0}H(x_1)=+\infty,
\qquad
\lim_{x_1\uparrow a}H(x_1)=-\ell(a)<0.
\end{equation}
In particular, the system has at least one positive equilibrium.
Distinct positive equilibria have distinct $x_1$-coordinates; if they
are ordered by increasing $x_1$, then their $x_2$-coordinates are
ordered in the opposite direction.
\end{lemma}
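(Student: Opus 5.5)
The proof is a direct computation, organized around solving each equilibrium equation for one variable. For $x_1>0$, $x_2\ge 0$ the denominator $D_1=1+x_1/K_1+x_2/K_{21}$ is positive. Hence $f(x_1,x_2)=0$ is equivalent to
\[
\alpha_{01}+\alpha_1x_1/K_1=\delta_1x_1\bigl(1+x_1/K_1+x_2/K_{21}\bigr).
\]
Dividing by $\delta_1x_1>0$ and solving the resulting linear equation for $x_2$ gives exactly $x_2=h(x_1)$.

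From this expression I would read off the remaining properties of $h$:
\begin{itemize}
\item smoothness on $(0,\infty)$, as a rational function with no poles there;
\item the derivative formula \eqref{eq:h-prime}, by direct differentiation;
\item strict monotonicity, since both terms in \eqref{eq:h-prime} are positive;
\item the two limits, since the $\alpha_{01}/(\delta_1x_1)$ term dominates near $0$ and the $-x_1/K_1$ term dominates at infinity.
\end{itemize}
The intermediate value theorem together with strict monotonicity then gives a unique zero $a$. The positive part of the first nullcline consists of the points with $x_2=h(x_1)>0$, which is precisely $0<x_1<a$. The function $k$ is handled identically with the indices swapped.

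Next, $k$ restricted to $(0,b)$ is continuous and strictly decreasing, with limits $+\infty$ at $0$ and $0$ at $b$. It is therefore a bijection onto $(0,\infty)$, and $k'<0$ throughout. The inverse function theorem then gives that $\ell=k^{-1}$ is $C^\infty$ and strictly decreasing, with values in $(0,b)$.

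A positive point is an equilibrium exactly when $x_2=h(x_1)$ with $0<x_1<a$ and $x_1=k(x_2)$ with $0<x_2<b$. The second condition is equivalent to $x_2=\ell(x_1)$ for $x_1>0$. So equilibria correspond bijectively, via $x_1\mapsto(x_1,h(x_1))$, to zeros of $H$ in $(0,a)$.

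For the endpoint behaviour of $H$: as $x_1\downarrow0$, $h\to+\infty$ while $\ell$ stays bounded by $b$, so $H\to+\infty$. As $x_1\uparrow a$, continuity of $\ell$ at $a$ and $h(a)=0$ give $H\to-\ell(a)<0$. The intermediate value theorem then produces at least one zero.

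For the ordering claim, the bijection above already shows that distinct equilibria have distinct $x_1$-coordinates. Their $x_2$-coordinates are $h(x_1)$ with $h$ strictly decreasing, so ordering by increasing $x_1$ reverses the order in $x_2$.

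I do not expect a real obstacle. The only point needing care is the bookkeeping that the positive nullcline is exactly the graph of $h$ restricted to $(0,a)$, and likewise for $k$ on $(0,b)$. This is what makes the correspondence with zeros of $H$ on $(0,a)$ exact rather than merely an inclusion.
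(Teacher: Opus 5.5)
Your proposal is correct and follows essentially the same route as the paper. You solve each equilibrium equation for one variable, read off smoothness, monotonicity and limits of $h$ and $k$, invert $k$ on $(0,b)$ via the inverse function theorem, and then get the bijection with zeros of $H$ on $(0,a)$, the endpoint signs \eqref{eq:H-end-signs} and the order reversal from $h'<0$, exactly as in the paper's argument.
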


\begin{proof}
Solving $f(x_1,x_2)=0$ for $x_2$ gives
$x_2=h(x_1)$, and differentiation gives \eqref{eq:h-prime}.  The
limits and strict monotonicity of $h$ give the unique zero $a$ and the
description of the positive first nullcline.

The same calculation for $g(x_1,x_2)=0$ gives
$x_1=k(x_2)$ and
\[
k'(x_2)
=-K_{12}\left(
\frac{\alpha_{02}}{\delta_2 x_2^2}+\frac{1}{K_2}
\right)<0.
\]
Also $k(x_2)\to+\infty$ as $x_2\downarrow0$, while
$k(x_2)\to0$ as $x_2\uparrow b$.  Thus
$k:(0,b)\to(0,\infty)$ is a bijection.  Since $k'$ never vanishes,
the inverse function theorem implies that its inverse $\ell$ is
$C^\infty$; it is strictly decreasing because $k$ is.

The equilibrium condition is exactly $h(x_1)=\ell(x_1)$.  As
$x_1\downarrow0$, $h(x_1)\to+\infty$ whereas
$\ell(x_1)\to b$; as $x_1\uparrow a$, $h(x_1)\to0$ whereas
$\ell(a)>0$.  This proves \eqref{eq:H-end-signs}, and existence follows
from the intermediate value theorem.

Finally, for any fixed $x_1>0$, the first nullcline contains at most
one point, namely $(x_1,h(x_1))$.  Hence two distinct positive
equilibria cannot have the same $x_1$-coordinate.  The reverse ordering
of their $x_2$-coordinates follows from $h'<0$.
\end{proof}

\begin{lemma}\label{lem:polynomial}
There is a nonzero polynomial $P$ of degree at most four such that the
positive equilibria are precisely the points $(x_1,h(x_1))$ for which
$0<x_1<a$ and $P(x_1)=0$.

More precisely, if $x_1^*$ is the first coordinate of a positive
equilibrium and has multiplicity $m$ as a root of $P$, then, in a
neighborhood of $x_1^*$,
\begin{equation}\label{eq:H-order}
H(x_1)=(x_1-x_1^*)^m R(x_1),
\end{equation}
where $R$ is $C^\infty$ and $R(x_1^*)\ne0$.  Thus $m$ is the order of
vanishing of $H$ at $x_1^*$, and $H$ changes sign at $x_1^*$ if and
only if $m$ is odd.
\end{lemma}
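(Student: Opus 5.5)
Since $\ell$ is only defined implicitly, I will work with $x_2=h(x_1)$ substituted into the second nullcline equation written in the form $x_1=k(x_2)$. Define, for $0<x_1<a$,
\[
G(x_1):=x_1-k(h(x_1)).
\]
Because $h$ maps $(0,a)$ onto $(0,\infty)$ and $k$ is a decreasing bijection $(0,b)\to(0,\infty)$ that is negative past $b$, we get the equivalence
\[
G(x_1)=0\iff k(h(x_1))=x_1\iff h(x_1)=\ell(x_1)\iff H(x_1)=0 .
\]
The plan is therefore to clear denominators in $G$ to obtain $P$, and then to relate the order of vanishing of $G$ to that of $H$.

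Clearing denominators goes as follows. The function $h$ has the form $h(x_1)=\frac{q(x_1)}{x_1}$, where $q(x_1)=K_{21}\bigl(\alpha_{01}/\delta_1+(\alpha_1/(\delta_1K_1)-1)x_1-x_1^2/K_1\bigr)$ is a quadratic with $q(0)\neq0$. Similarly
\[
k(y)=\frac{c_0+c_1y+c_2y^2}{y},\qquad c_0=\frac{K_{12}\alpha_{02}}{\delta_2}>0,\quad c_2=-\frac{K_{12}}{K_2}<0 .
\]
On $(0,a)$ we have $h>0$, so $q>0$ and $x_1>0$. Then
\[
G(x_1)=x_1-\frac{c_0x_1^2+c_1x_1q+c_2q^2}{x_1q}
=\frac{x_1^2q-c_0x_1^2-c_1x_1q-c_2q^2}{x_1q}.
\]
Set $P(x_1):=x_1^2q(x_1)-c_0x_1^2-c_1x_1q(x_1)-c_2q(x_1)^2$. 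This has degree at most four. It is nonzero because $P(0)=-c_2q(0)^2>0$. On $(0,a)$ the denominator $x_1q(x_1)$ is positive and $C^\infty$, so the zeros of $G$, and hence of $H$, in $(0,a)$ are exactly the zeros of $P$ there. This gives the first claim.

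For the multiplicity statement, write $P(x_1)=(x_1-x_1^*)^mS(x_1)$ with $S$ a polynomial and $S(x_1^*)\neq0$. Then $G=(x_1-x_1^*)^m\,S/(x_1q)$, with a smooth nonvanishing cofactor near $x_1^*$. The main step is to transfer this from $G$ to $H$. Put $y=h(x_1)$ and $y^*=\ell(x_1^*)=h(x_1^*)$. Then
\[
G(x_1)=k(\ell(x_1))-k(h(x_1)) .
\]
By the integral form of the mean value theorem,
\[
k(u)-k(v)=(u-v)\int_0^1k'(v+s(u-v))\,ds .
\]
Both $\ell(x_1)$ and $h(x_1)$ lie in a small neighborhood of $y^*\in(0,b)$, where $k'<0$. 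Hence
\[
G(x_1)=-H(x_1)\,M(x_1),
\]
where $M$ is $C^\infty$ (as $\ell,h,k$ are smooth) and $M>0$ near $x_1^*$. Consequently $H=-(x_1-x_1^*)^m\,S/(x_1qM)$, and we take $R:=-S/(x_1qM)$, which is smooth with $R(x_1^*)\neq0$. This is \eqref{eq:H-order}.

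The sign-change conclusion is immediate. Since $R$ keeps a constant sign near $x_1^*$, the factor $(x_1-x_1^*)^m$ decides whether $H$ changes sign, and it does so exactly when $m$ is odd. Uniqueness of the representation shows that $m$ is the order of vanishing of $H$.

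The only delicate point I anticipate is the domain bookkeeping. One must check that $h(x_1)$ stays in $(0,b)$ near a root $x_1^*$, so that the integral representation uses $k'$ only where $k$ is defined and smooth. This holds by continuity, since $h(x_1^*)=\ell(x_1^*)\in(0,b)$.
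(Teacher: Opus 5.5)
Your proof is correct and is essentially the paper's argument. Your $P$ is a positive constant multiple of the paper's, since $Dx_2^2+Ex_1x_2+Fx_2-\alpha_{02}=\frac{\delta_2x_2}{K_{12}}\bigl(x_1-k(x_2)\bigr)$. Your integral mean-value factorization of $x_1-k(h(x_1))=k(\ell(x_1))-k(h(x_1))$ plays the role of the paper's exact factorization $G(x_1,h)-G(x_1,\ell)=(h-\ell)Q$ with $Q(x_1^*)=G_{x_2}(x_1^*,x_2^*)>0$.

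There is one harmless sign slip. With $M:=-\int_0^1k'\bigl(h+s(\ell-h)\bigr)\,ds>0$ one gets $x_1-k(h(x_1))=H\,M$, not $-H\,M$. Also, $k'<0$ on all of $(0,\infty)$ and the segment between $h(x_1)$ and $\ell(x_1)$ stays in $(0,\infty)$, so $M>0$ on all of $(0,a)$ and the domain bookkeeping you flagged is automatic.
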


\begin{proof}
Clearing the positive denominator in $f=0$ gives
\begin{equation}\label{eq:F}
A x_1^2+B x_1x_2+C x_1-\alpha_{01}=0,
\end{equation}
where
\begin{equation}\label{eq:ABC}
A=\frac{\delta_1}{K_1}>0,
\qquad
B=\frac{\delta_1}{K_{21}}>0,
\qquad
C=\delta_1-\frac{\alpha_1}{K_1}.
\end{equation}
Similarly, $g=0$ is equivalent to
\begin{equation}\label{eq:G}
G(x_1,x_2):=
D x_2^2+E x_1x_2+F x_2-\alpha_{02}=0,
\end{equation}
where
\begin{equation}\label{eq:DEF}
D=\frac{\delta_2}{K_2}>0,
\qquad
E=\frac{\delta_2}{K_{12}}>0,
\qquad
F=\delta_2-\frac{\alpha_2}{K_2}.
\end{equation}
From \eqref{eq:F},
\begin{equation}\label{eq:N}
x_2=h(x_1)=\frac{N(x_1)}{B x_1},
\qquad
N(x_1):=\alpha_{01}-Ax_1^2-Cx_1.
\end{equation}
Substitution into \eqref{eq:G}, followed by multiplication by
$B^2x_1^2$, gives
\begin{equation}\label{eq:P}
P(x_1)
:=D N(x_1)^2
+EBx_1^2N(x_1)
+FBx_1N(x_1)
-\alpha_{02}B^2x_1^2.
\end{equation}
Thus $\deg P\leq4$, and
\begin{equation}\label{eq:P-at-zero}
P(0)=D\alpha_{01}^2>0,
\end{equation}
so $P$ is not identically zero.  By construction,
\begin{equation}\label{eq:P-G}
P(x_1)=B^2x_1^2G(x_1,h(x_1)).
\end{equation}
Hence, for $0<x_1<a$, the zeros of $P$ correspond exactly to the
positive equilibria.

It remains to compare orders of vanishing.  Since
\[
G_{x_1}(x_1,x_2)=E x_2>0
\]
in the positive quadrant, differentiation of
$G(k(x_2),x_2)=0$ gives
\[
G_{x_2}(k(x_2),x_2)
=-G_{x_1}(k(x_2),x_2)k'(x_2)>0.
\]
Because $G(x_1,\ell(x_1))=0$ and $G$ is quadratic in $x_2$,
\begin{align}
G(x_1,h(x_1))
&=G(x_1,h(x_1))-G(x_1,\ell(x_1)) \notag\\
&=\bigl(h(x_1)-\ell(x_1)\bigr)
\bigl[D(h(x_1)+\ell(x_1))+Ex_1+F\bigr] \notag\\
&=H(x_1)Q(x_1),
\label{eq:factorization}
\end{align}
where
\[
Q(x_1):=D(h(x_1)+\ell(x_1))+Ex_1+F.
\]
Consequently, on all of $(0,\infty)$,
\begin{equation}\label{eq:P-factorization}
P(x_1)=B^2x_1^2Q(x_1)H(x_1).
\end{equation}
At a positive equilibrium $(x_1^*,x_2^*)$,
\[
Q(x_1^*)=2Dx_2^*+Ex_1^*+F
=G_{x_2}(x_1^*,x_2^*)>0.
\]
Thus $Q$ is nonzero in a neighborhood of $x_1^*$.  If $m$ is the
multiplicity of $x_1^*$ as a root of $P$, divide
\eqref{eq:P-factorization} by
$(x_1-x_1^*)^mB^2x_1^2Q(x_1)$ to obtain
\eqref{eq:H-order} with $R(x_1^*)\ne0$.  The final sign-change
statement follows immediately.
\end{proof}

\begin{lemma}\label{lem:jacobian}
Along the positive part of the first nullcline,
$f_{x_1}<0$, and along the positive part of the second nullcline,
$g_{x_2}<0$.  In particular, at every positive equilibrium,
\begin{equation}\label{eq:trace-negative}
\operatorname{tr}J<0,
\end{equation}
where $J$ is the Jacobian matrix of the vector field.

Moreover, at every positive equilibrium $(x_1^*,x_2^*)$,
\begin{equation}\label{eq:Hprime-det}
H'(x_1^*)
=-\frac{\det J(x_1^*,x_2^*)}
{f_{x_2}(x_1^*,x_2^*)\,g_{x_2}(x_1^*,x_2^*)},
\end{equation}
and the denominator is positive.  Consequently, a positive equilibrium
is degenerate if and only if the corresponding zero of $H$ has order
of vanishing greater than one.
\end{lemma}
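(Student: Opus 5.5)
The sign claims come from differentiating the quotient form of $f$ and using the nullcline relation to simplify. The determinant formula comes from implicit differentiation of the two nullcline graphs $x_2=h(x_1)$ and $x_2=\ell(x_1)$.

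\emph{Sign of $f_{x_1}$ on the first nullcline.} Write $f=\frac{U}{V}-\delta_1x_1$, where $U=\alpha_{01}+\alpha_1x_1/K_1$ and $V=1+x_1/K_1+x_2/K_{21}$. Then
\[
f_{x_1}=\frac{\alpha_1/K_1}{V}-\frac{U}{K_1V^2}-\delta_1 .
\]
On the nullcline $U/V=\delta_1x_1$, so
\[
f_{x_1}=\frac{\alpha_1/K_1}{V}-\frac{\delta_1x_1}{K_1V}-\delta_1
=\frac{\alpha_1-\delta_1x_1-\delta_1K_1V}{K_1V}.
\]
Since $K_1V=K_1+x_1+K_1x_2/K_{21}$, the numerator equals
\[
\alpha_1-\delta_1(K_1+2x_1+K_1x_2/K_{21}).
\]
This expression need not be negative by inspection, so it is cleaner to use the polynomial form. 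On $V>0$, $f=-\Phi/V$ with $\Phi(x_1,x_2):=Ax_1^2+Bx_1x_2+Cx_1-\alpha_{01}$, scaled as in \eqref{eq:F}. Where $\Phi=0$ this gives $f_{x_1}=-\Phi_{x_1}/V$. The identity $\Phi(x_1,h(x_1))=0$ gives $\Phi_{x_1}=-\Phi_{x_2}h'=-Bx_1h'>0$, by \eqref{eq:h-prime}. Hence $f_{x_1}<0$ on the nullcline.

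\emph{Sign of $g_{x_2}$ on the second nullcline.} In the same way $g=-G/W$ with $W>0$. On the nullcline $g_{x_2}=-G_{x_2}/W<0$, because $G_{x_2}>0$ there, as shown in the proof of Lemma~\ref{lem:polynomial}. Since an equilibrium lies on both nullclines, $\operatorname{tr}J=f_{x_1}+g_{x_2}<0$.

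\emph{The off-diagonal derivatives.} At an equilibrium the same trick gives $f_{x_2}=-\Phi_{x_2}/V=-Bx_1/V<0$ and $g_{x_1}=-Ex_2/W<0$. In particular $f_{x_2}g_{x_2}>0$, which is the positivity of the denominator in \eqref{eq:Hprime-det}.

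\emph{The formula for $H'$.} Differentiating $f(x_1,h(x_1))=0$ gives $h'=-f_{x_1}/f_{x_2}$. Differentiating $g(x_1,\ell(x_1))=0$ gives $\ell'=-g_{x_1}/g_{x_2}$; this uses $g_{x_2}\ne0$, and $\ell$ is $C^\infty$ by Lemma~\ref{lem:nullclines}. Therefore, at an equilibrium,
\[
H'=-\frac{f_{x_1}}{f_{x_2}}+\frac{g_{x_1}}{g_{x_2}}
=\frac{-f_{x_1}g_{x_2}+g_{x_1}f_{x_2}}{f_{x_2}g_{x_2}}
=-\frac{\det J}{f_{x_2}g_{x_2}} ,
\]
which is \eqref{eq:Hprime-det}.

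\emph{Degeneracy.} Since the trace is nonzero, an equilibrium is degenerate exactly when $\det J=0$. By \eqref{eq:Hprime-det} this happens exactly when $H'(x_1^*)=0$. By Lemma~\ref{lem:polynomial}, $H$ vanishes at $x_1^*$ to order $m$ with $R(x_1^*)\ne0$, so $H'(x_1^*)=0$ if and only if $m>1$.

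The main obstacle is the sign of $f_{x_1}$. A direct computation leaves an indefinite expression involving $\alpha_1$, and this is resolved by passing to the polynomial form and using $h'<0$.
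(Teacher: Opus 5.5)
Your proof is correct and is essentially the paper's argument: the signs of $f_{x_1}$ and $g_{x_2}$ come from implicit differentiation along the nullcline graphs, combined with the signs of the cross-partials and of $h',k'$, and your derivation of the formula for $H'$ and of the degeneracy criterion matches the paper's. The only difference is your detour through the cleared-denominator forms $\Phi$ and $G$, which is unnecessary: direct differentiation gives $f_{x_2}<0$ and $g_{x_1}<0$ throughout $\mathbb R^2_{>0}$, so $f_{x_1}=-f_{x_2}h'<0$ and $g_{x_2}=-g_{x_1}k'<0$ follow at once, without the indefinite quotient computation you ran into.
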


\begin{proof}
Direct differentiation gives
\begin{equation}\label{eq:offdiagonal}
f_{x_2}<0,
\qquad
g_{x_1}<0
\end{equation}
throughout the positive quadrant.  Differentiating
$f(x_1,h(x_1))=0$ along the positive first nullcline gives
\[
f_{x_1}+f_{x_2}h'=0,
\qquad\text{hence}\qquad
h'=-\frac{f_{x_1}}{f_{x_2}}.
\]
Since $h'<0$ and $f_{x_2}<0$, it follows that
\begin{equation}\label{eq:f11-negative}
f_{x_1}<0
\end{equation}
along the entire positive first nullcline.

Similarly, differentiating $g(k(x_2),x_2)=0$ along the positive second
nullcline gives
\[
g_{x_1}k'(x_2)+g_{x_2}=0.
\]
Since $g_{x_1}<0$ and $k'<0$, we obtain
\begin{equation}\label{eq:g22-negative}
g_{x_2}<0
\end{equation}
along the entire positive second nullcline.  This proves
\eqref{eq:trace-negative} at an equilibrium.

Because $x_2=\ell(x_1)$ is the inverse representation of the second
nullcline,
\[
\ell'=-\frac{g_{x_1}}{g_{x_2}}.
\]
Therefore, at an equilibrium,
\begin{align*}
H'
&=h'-\ell'\\
&=-\frac{f_{x_1}}{f_{x_2}}+\frac{g_{x_1}}{g_{x_2}}\\
&=-\frac{f_{x_1}g_{x_2}-f_{x_2}g_{x_1}}
{f_{x_2}g_{x_2}}
=-\frac{\det J}{f_{x_2}g_{x_2}}.
\end{align*}
By \eqref{eq:offdiagonal} and \eqref{eq:g22-negative}, the denominator
is positive.  Hence $\det J=0$ if and only if $H'=0$, and
Lemma~\ref{lem:polynomial} shows that this is equivalent to order of
vanishing greater than one.
\end{proof}

\begin{proposition}\label{prop:number}
The system \eqref{eq:system1}--\eqref{eq:system2} has either one, two,
or three distinct equilibria in $\mathbb R_{>0}^2$.

If it has exactly two distinct positive equilibria, then one is
nondegenerate and the other is degenerate; equivalently, one
nullcline intersection is transverse and the other is a tangency.  If
it has exactly three distinct positive equilibria, then all three are
nondegenerate.  Consequently, any configuration in which all positive
nullcline intersections are transverse has either one or three
positive equilibria.
\end{proposition}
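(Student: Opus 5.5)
The plan is a parity and degree count on the zeros of $H$ in $(0,a)$, using Lemmas~\ref{lem:nullclines}, \ref{lem:polynomial} and \ref{lem:jacobian}. Let the positive equilibria have first coordinates $x_1^{(1)}<\dots<x_1^{(r)}$ in $(0,a)$. By Lemma~\ref{lem:polynomial} each is a root of $P$ with some multiplicity $m_j\ge1$, and $H$ vanishes to exactly order $m_j$ there. Since $P\not\equiv0$ and $\deg P\le4$, we get
\[
\sum_{j=1}^r m_j\le4 .
\]
This already gives $r\le4$, and $r\ge1$ by Lemma~\ref{lem:nullclines}.

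\textbf{Parity step.} $H$ is continuous on $(0,a)$ and has no zeros there other than the $x_1^{(j)}$. By \eqref{eq:H-end-signs}, $H$ is positive near $0$ and negative near $a$. By Lemma~\ref{lem:polynomial}, $H$ changes sign at $x_1^{(j)}$ exactly when $m_j$ is odd. Going from positive to negative requires an odd number of sign changes, so the number of indices $j$ with $m_j$ odd is odd.

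\textbf{Case analysis.}
\begin{itemize}
\item If $r=4$, all $m_j=1$ by the degree bound. That gives four odd multiplicities, an even number, which is a contradiction. So $r\le3$.
\item If $r=3$, then $\sum m_j\le4$ leaves either all $m_j=1$, or one $m_j=2$ with the other two equal to $1$. In the latter case only two multiplicities are odd, contradicting parity. Hence all $m_j=1$.
\item If $r=2$, the possibilities with $\sum m_j\le4$ are $(1,1)$, $(1,2)$, $(2,1)$, $(1,3)$, $(3,1)$, $(2,2)$. Parity excludes $(1,1)$, $(1,3)$, $(3,1)$ (two odd entries) and $(2,2)$ (none). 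So exactly one $m_j$ equals $1$ and the other equals $2$.
\end{itemize}

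\textbf{Translation to the geometric statement.} Lemma~\ref{lem:jacobian} says a positive equilibrium is nondegenerate exactly when $m_j=1$. This gives the claims about nondegeneracy. Transversality of a nullcline intersection means the graphs $x_2=h(x_1)$ and $x_2=\ell(x_1)$ cross with distinct slopes, i.e.\ $H'(x_1^*)\neq0$, which is again $m_j=1$. So "degenerate" and "tangency" coincide. If every intersection is transverse, all $m_j=1$, which rules out $r=2$ and leaves $r\in\{1,3\}$.

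There is no serious obstacle. The only delicate point is the parity step: it needs the exact endpoint signs from Lemma~\ref{lem:nullclines}, and the equivalence between sign change of $H$ and odd multiplicity from Lemma~\ref{lem:polynomial}. The latter relies on $Q\neq0$ near each equilibrium, so that multiplicity as a root of $P$ equals the order of vanishing of $H$. Roots of $P$ outside $(0,a)$ only use up part of the degree budget and never help, so they can be ignored.
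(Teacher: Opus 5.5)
Your proposal is correct and follows essentially the same route as the paper. Both arguments bound the total multiplicity by $\sum m_j\le 4$ using Lemma~\ref{lem:polynomial}, derive a parity constraint on the odd-order zeros from the endpoint signs \eqref{eq:H-end-signs}, do a case analysis on the number of zeros, and translate the result via Lemma~\ref{lem:jacobian}. The only cosmetic difference is that you enumerate all multiplicity pairs in the two-equilibrium case, whereas the paper first deduces one odd and one even order and then applies the degree bound.
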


\begin{proof}
Existence of at least one positive equilibrium follows from
Lemma~\ref{lem:nullclines}.  By Lemma~\ref{lem:polynomial}, every zero
of $H$ in $(0,a)$ is a root of a nonzero polynomial of degree at most
four, and its order of vanishing is the corresponding root
multiplicity.  Hence five or more distinct zeros are impossible.

There cannot be four distinct positive equilibria.  If $H$ had four
distinct zeros in $(0,a)$, then all four would be simple because their
total multiplicity is at most four.  By Lemma~\ref{lem:polynomial},
$H$ would change sign at each of them.  Its signs near the two
endpoints of $(0,a)$ would therefore be the same, contradicting
\eqref{eq:H-end-signs}.  Hence there are at most three distinct
positive equilibria.

Suppose that there are exactly two, with orders of vanishing $m_1$ and
$m_2$.  Since the endpoint signs in \eqref{eq:H-end-signs} are
opposite, an odd number of these two zeros must have odd order.
Therefore one order is odd and the other is even.  Since
\[
m_1+m_2\leq4,
\]
the only possibility, up to ordering, is
\[
(m_1,m_2)=(1,2).
\]
Thus one zero is simple and the other is double.  By
Lemma~\ref{lem:jacobian}, the corresponding equilibria are,
respectively, nondegenerate and degenerate.  Since the two nullclines
are smooth graphs, $H'=0$ at the latter point is exactly equality of
their slopes, that is, tangency.

Finally, suppose that there are exactly three distinct positive
equilibria, with orders of vanishing $m_1,m_2,m_3$.  Their total is at
least three and at most four.  If it were four, exactly one $m_i$ would
equal two and the other two would equal one.  There would then be
exactly two odd-order zeros, so the sign of $H$ would be the same near
the two endpoints, again contradicting \eqref{eq:H-end-signs}.
Therefore
$
m_1=m_2=m_3=1
$.
Lemma~\ref{lem:jacobian} now implies that all three equilibria are
nondegenerate.
\end{proof}

\begin{remark}[The three-equilibrium hypothesis is nonvacuous]
\label{rem:examples}
Here are two symmetric examples.  First, take
\[
\alpha_{01}=\alpha_{02}=0.1,\qquad
\alpha_1=\alpha_2=10,\qquad
\delta_1=\delta_2=1,
\]
\[
K_1=K_2=1,\qquad K_{12}=K_{21}=0.2.
\]
For these parameters the polynomial \eqref{eq:P} factors, up to a
nonzero constant, as
\[
(40x_1^2-360x_1+1)(60x_1^2-90x_1-1),
\]
and has three positive roots corresponding to the three positive
equilibria
\[
(0.00278,8.99722),\;
(1.51103,1.51103),\;
(8.99722,0.00278),
\]
where the displayed values are rounded.

As an example with no change in production upon self-binding, take
\[
\alpha_{01}=\alpha_{02}=\alpha_1=\alpha_2=100,\qquad
\delta_1=\delta_2=1,
\]
\[
K_1=K_2=1,\qquad K_{12}=K_{21}=0.01.
\]
Now \eqref{eq:P} factors, again up to a nonzero constant, as
\[
(99x_1^2-9801x_1+100)(101x_1^2-99x_1-100),
\]
and the three positive equilibria are approximately
\begin{gather*}
(0.01020,98.98980),\qquad
(1.59929,1.59929),\\
(98.98980,0.01020).
\end{gather*}
Thus Hill exponent one does not by itself preclude deterministic
bistability in the present class.
\end{remark}

\section{Proof of Theorem~\ref{thm:bistability}}

We use the following standard planar convergence result for competitive
systems; see~\cite{Hirsch1982}.

\begin{lemma}[Planar competitive convergence]\label{lem:competitive-convergence}
Let $U\subset\mathbb R^2$ be an open rectangle and consider a $C^1$
system
\[
\dot z_1=F_1(z_1,z_2),\qquad
\dot z_2=F_2(z_1,z_2)
\]
on $U$.  Assume that
\[
\frac{\partial F_1}{\partial z_2}<0,
\qquad
\frac{\partial F_2}{\partial z_1}<0
\]
throughout $U$.  If the positive semiorbit of a solution has compact
closure contained in $U$, then that solution converges, as
$t\to+\infty$, to an equilibrium.
\end{lemma}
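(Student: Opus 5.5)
The approach is to reduce to a cooperative system and show that the velocity vector $\dot z(t)$ eventually lies in a fixed closed quadrant. That makes both coordinates eventually monotone, and compactness of the semiorbit closure then gives convergence.

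\emph{Step 1: reduce to a cooperative system.} Set $w=(z_1,-z_2)$. This maps the rectangle $U$ to a rectangle, and the new vector field $(F_1(w_1,-w_2),\,-F_2(w_1,-w_2))$ has strictly positive off-diagonal Jacobian entries. Monotonicity of a coordinate is unaffected by this sign change, so it suffices to treat a system with $\partial F_1/\partial z_2>0$ and $\partial F_2/\partial z_1>0$ on $U$.

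\emph{Step 2: the velocity obeys the variational equation.} Let $z(t)$ be the solution and put $v(t)=\dot z(t)$. If $v(t_0)=0$ for some $t_0$, then $z(t_0)$ is an equilibrium and we are done by uniqueness. Otherwise $v(t)\neq0$ for all $t$. Since $F$ is only assumed $C^1$, I would not differentiate $\dot z$ directly. Instead I use that $v(t)=D\phi_{t-s}(z(s))\,v(s)$, and that $D\phi$ solves $\dot Y=J(z(t))Y$ with $J$ continuous and having positive off-diagonal entries.

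\emph{Step 3: the closed quadrants are forward invariant for $v$.} By the Kamke--Müller argument, the closed quadrants $\{v_1\ge0,v_2\ge0\}$ and $\{v_1\le0,v_2\le0\}$ are forward invariant. Concretely, write $J=\mathrm{diag}(J_{11},J_{22})+N$ with $N\ge0$ off the diagonal. Via variation of constants, or by adding $\lambda I$ to make the matrix entrywise nonnegative, $Y(t)$ is an entrywise nonnegative matrix for $t\ge s$. Strict positivity of the off-diagonal entries adds something more. If $v(s)$ lies on the boundary of such a quadrant with one coordinate zero and the other positive, then $\dot v$ pushes the zero coordinate strictly positive, so $v$ enters the open quadrant immediately.

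\emph{Step 4: conclude eventual monotonicity and convergence.} There are two cases.
\begin{itemize}
\item[(i)] Some $v(t_0)$ lies in a closed first or third quadrant. Then it stays there for all $t\ge t_0$, so both $z_1$ and $z_2$ are monotone on $[t_0,\infty)$.
\item[(ii)] $v(t)$ stays in the open second or fourth quadrant for all $t$. Since $v$ never vanishes, by continuity it stays in one of them. Then again each coordinate is monotone.
\end{itemize}
In either case both coordinates are eventually monotone and bounded, because the orbit closure is compact. Hence $z(t)\to p$, and $p\in U$ because the closure lies in $U$. Continuity of $F$ gives that the $\omega$-limit set $\{p\}$ is invariant, so $F(p)=0$.

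I expect the main obstacle to be the rigorous justification of Step 3 under mere $C^1$ regularity, namely positivity of the fundamental matrix of $\dot Y=J(t)Y$ when the off-diagonal entries of $J(t)$ are positive. I would handle it by the shift $J+\lambda I$ with $\lambda$ bounding $|J_{ii}|$ on the compact orbit closure, together with a Picard-iteration positivity argument. For the borderline case in Step 3, a first-order Taylor expansion at the boundary time suffices.
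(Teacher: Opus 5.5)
The paper gives no argument for this lemma; it quotes it from Hirsch's 1982 work on competitive and cooperative systems. Your proof is correct and gives a self-contained replacement along the classical planar route. After the substitution $z_2\mapsto -z_2$, the velocity $v=\dot z$ solves a linear equation $\dot v=J(z(t))v$ with nonnegative off-diagonal entries, so the closed quadrants $\{v\ge 0\}$ and $\{v\le 0\}$ are forward invariant. If $v$ never enters them, it stays in one open mixed-sign quadrant. Either way both coordinates are eventually monotone and bounded.

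The citation places the lemma inside Hirsch's general theory of limit sets in $\mathbb R^n$. Your route is elementary and shows slightly more:
\begin{itemize}
\item the sign conditions are used only along the given orbit;
\item non-strict inequalities suffice, since the strict positivity you use to push $v$ into the open quadrant is never needed in Step 4;
\item the shape of $U$ plays no role.
\end{itemize}

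Two simplifications are available. First, since $F$ and $z$ are $C^1$, the chain rule already gives $\dot v=J(z(t))v$ for $v=F\circ z$, so the detour through $D\phi_{t-s}$ is unnecessary. Second, at the end it is simpler to note that $\dot z(t)=F(z(t))\to F(p)$, and a nonzero limit would make $z$ unbounded. That avoids appealing to invariance of $\omega$-limit sets, which rests on continuity of the flow rather than of $F$ alone.
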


\begin{proof}[Proof of Theorem~\ref{thm:bistability}]
By Lemma~\ref{lem:nullclines}, the three distinct equilibria can be
labeled so that \eqref{eq:ordering} holds.  By
Proposition~\ref{prop:number}, all three are nondegenerate, and the
corresponding zeros
\[
x_1^{(1)}<x_1^{(2)}<x_1^{(3)}
\]
of $H$ are simple.  Since $H$ is positive near $0$, negative near $a$,
and changes sign at each simple zero, we have
\begin{equation}\label{eq:Hprime-signs}
H'(x_1^{(1)})<0,
\qquad
H'(x_1^{(2)})>0,
\qquad
H'(x_1^{(3)})<0.
\end{equation}
By Lemma~\ref{lem:jacobian},
\begin{equation}\label{eq:det-pattern}
\det J(p^{(1)})>0,
\qquad
\det J(p^{(2)})<0,
\qquad
\det J(p^{(3)})>0,
\end{equation}
and $\operatorname{tr}J<0$ at all three equilibria.

At every positive equilibrium,
\begin{align}
(\operatorname{tr}J)^2-4\det J
&=(f_{x_1}+g_{x_2})^2
-4(f_{x_1}g_{x_2}-f_{x_2}g_{x_1}) \notag\\
&=(f_{x_1}-g_{x_2})^2+4f_{x_2}g_{x_1}>0,
\label{eq:discriminant}
\end{align}
because $f_{x_2}<0$ and $g_{x_1}<0$.  Thus the eigenvalues at
$p^{(1)}$ and $p^{(3)}$ are real and negative, whereas those at
$p^{(2)}$ are real and have opposite signs.  Hence $p^{(1)}$ and
$p^{(3)}$ are hyperbolic asymptotically stable nodes and $p^{(2)}$ is a
hyperbolic saddle.  These conclusions use the standard linearization
principle; see, for example,~\cite{mct}.

It remains to prove the global assertion.  The vector field extends
smoothly to an open neighborhood of the closed positive quadrant,
because both denominators are strictly positive on that quadrant.
We first prove that $\mathbb R_{>0}^2$ is forward invariant.  On the
face $x_1=0$, $x_2\geq0$,
\[
f(0,x_2)=\frac{\alpha_{01}}{1+x_2/K_{21}}>0,
\]
and on the face $x_2=0$, $x_1\geq0$,
\[
g(x_1,0)=\frac{\alpha_{02}}{1+x_1/K_{12}}>0.
\]
Suppose, for contradiction, that a solution with
$x_1(0),x_2(0)>0$ has a finite first exit time $\tau$ from the open
positive quadrant.  By continuity,
$x_1(\tau),x_2(\tau)\geq0$, and at least one coordinate is zero.  If
$x_1(\tau)=0$, then $x_1(t)>0$ for $0\leq t<\tau$, so
\[
\dot x_1(\tau)
=\lim_{s\downarrow0}\frac{x_1(\tau)-x_1(\tau-s)}{s}\leq0.
\]
But the vector field gives $\dot x_1(\tau)=f(0,x_2(\tau))>0$, a
contradiction.  The case $x_2(\tau)=0$ is identical.  Thus the open
positive quadrant is forward invariant on the maximal forward interval
of existence.

Set
\[
M_1:=\max\{\alpha_{01},\alpha_1\},
\qquad
M_2:=\max\{\alpha_{02},\alpha_2\}.
\]
For $x_1,x_2\geq0$,
\begin{align*}
\frac{\alpha_{01}+\alpha_1x_1/K_1}
{1+x_1/K_1+x_2/K_{21}}
&\leq
\frac{\alpha_{01}+\alpha_1x_1/K_1}{1+x_1/K_1}
\leq M_1,\\
\frac{\alpha_{02}+\alpha_2x_2/K_2}
{1+x_2/K_2+x_1/K_{12}}
&\leq
\frac{\alpha_{02}+\alpha_2x_2/K_2}{1+x_2/K_2}
\leq M_2.
\end{align*}
Consequently,
\[
\dot x_1\leq M_1-\delta_1 x_1,
\qquad
\dot x_2\leq M_2-\delta_2 x_2.
\]
For a solution starting at
$x^0=(x_1^0,x_2^0)\in\mathbb R_{>0}^2$, scalar comparison gives
\begin{equation}\label{eq:upper-bounds}
0<x_1(t)\leq U_1,
\qquad
0<x_2(t)\leq U_2,
\qquad t\geq0,
\end{equation}
where
\[
U_1:=\max\left\{x_1^0,\frac{M_1}{\delta_1}\right\},
\qquad
U_2:=\max\left\{x_2^0,\frac{M_2}{\delta_2}\right\}.
\]
In particular, the solution cannot escape to infinity in finite time,
so it is defined for all $t\geq0$.

The same bounds keep the orbit uniformly away from the coordinate
axes.  Define
\begin{align*}
c_1&:=\frac{\alpha_{01}}
{1+U_1/K_1+U_2/K_{21}}>0,\\
c_2&:=\frac{\alpha_{02}}
{1+U_2/K_2+U_1/K_{12}}>0.
\end{align*}
Along the solution,
\[
\dot x_1\geq c_1-\delta_1x_1,
\qquad
\dot x_2\geq c_2-\delta_2x_2.
\]
A second comparison gives
\begin{align}
x_1(t)
&\geq \frac{c_1}{\delta_1}
+\left(x_1^0-\frac{c_1}{\delta_1}\right)e^{-\delta_1t}
\geq \min\left\{x_1^0,\frac{c_1}{\delta_1}\right\}>0,
\label{eq:lower-bound1}\\
x_2(t)
&\geq \frac{c_2}{\delta_2}
+\left(x_2^0-\frac{c_2}{\delta_2}\right)e^{-\delta_2t}
\geq \min\left\{x_2^0,\frac{c_2}{\delta_2}\right\}>0.
\label{eq:lower-bound2}
\end{align}
Thus every positive semiorbit has compact closure contained in
$\mathbb R_{>0}^2$.

By \eqref{eq:offdiagonal}, the system is strongly competitive on
$\mathbb R_{>0}^2$.  Lemma~\ref{lem:competitive-convergence} therefore
implies that every positive solution converges to an equilibrium.  By
hypothesis the only positive equilibria are $p^{(1)},p^{(2)},p^{(3)}$,
so every positive solution converges to exactly one of these three
points.  This proves the disjoint decomposition
\eqref{eq:global-partition}.

Since $p^{(1)}$ and $p^{(3)}$ are locally asymptotically stable, their
basins $\mathcal B_1$ and $\mathcal B_3$ are nonempty and open.  Since
$p^{(2)}$ is a hyperbolic saddle, the stable manifold theorem implies
that its stable set $W^s(p^{(2)})$ is a one-dimensional immersed stable
manifold~\cite{Perko2001}.  Therefore every initial condition outside
$W^s(p^{(2)})$ converges to one of the two locally asymptotically stable
equilibria.  This proves the global assertion.
\end{proof}

\begin{remark}[The symmetric exclusive switch is monostable]
\label{rem:exclusive-monostable}
Consider the symmetric special case
\begin{gather*}
\alpha_{01}=\alpha_{02}=\alpha_1=\alpha_2=r,\\
K_1=K_2=K_{12}=K_{21}=1/k,\qquad
\delta_1=\delta_2=d,
\end{gather*}
which gives the deterministic exclusive-switch equations of
\cite{Lipshtat2006,Loinger2007}.  This system has exactly one positive
equilibrium for every $r,k,d>0$, in agreement with the direct
rate-equation analysis in~\cite{Loinger2007}.

For completeness, this also follows from the results above.  Let
\[
S_1=1+x_1/K_1+x_2/K_{21}.
\]
On the first nullcline,
\[
\alpha_{01}+\alpha_1x_1/K_1=\delta_1x_1S_1,
\]
and direct differentiation, followed by use of this identity, gives
\[
f_{x_1}
=-\frac{\alpha_{01}/x_1+\delta_1x_1/K_1}{S_1},
\qquad
f_{x_2}
=-\frac{\delta_1x_1}{K_{21}S_1}.
\]
The map $(x_1,x_2)\mapsto(x_2,x_1)$ maps equilibria to equilibria.
Moreover, $h(s)-s$ is strictly decreasing from $+\infty$ to
$-\infty$, so there is exactly one symmetric equilibrium $(u,u)$.
Any nonsymmetric equilibrium must occur together with its reflected
partner, so two equilibria are impossible.

If there were three equilibria, the symmetric one would be the middle
equilibrium in the ordering \eqref{eq:ordering}, and
Theorem~\ref{thm:bistability} would imply that it is a saddle.  At
$(u,u)$, however, the Jacobian has the form
\[
J=\begin{pmatrix}a&b\\ b&a\end{pmatrix},
\]
where $a<0$, $b<0$, and
\[
|a|-|b|=\frac{r}{uS_1}>0.
\]
Hence $\det J=a^2-b^2>0$, a contradiction.  Thus the symmetric
equilibrium is the unique positive equilibrium.  In this special case,
the bistability reported without cooperative binding is therefore
stochastic rather than deterministic.
\end{remark}

\begin{remark}[Transversality]
Proposition~\ref{prop:number} makes precise the sense in which the
two-equilibrium case is exceptional.  By a \emph{transverse
configuration} we mean one for which the two positive nullclines meet
transversely at every positive equilibrium, equivalently
$\det J\ne0$ at every positive equilibrium.  Proposition~\ref{prop:number}
shows that a transverse configuration has either one or three positive
equilibria.  No parameter-space genericity assertion is needed for the
results of this paper.
\end{remark}

\begin{remark}[Index-theoretic viewpoint]
Once nondegeneracy has been established, planar index theory gives an
alternative way to account for the equilibrium types.  The rectangle
used for the index calculation can be chosen explicitly in the proper
order.  First choose
\[
R_1>\frac{M_1}{\delta_1},
\qquad
R_2>\frac{M_2}{\delta_2}.
\]
Then the vector field points to the left on $x_1=R_1$ and downward on
$x_2=R_2$.  Next choose $\varepsilon>0$ sufficiently small that
\begin{gather*}
\frac{\alpha_{01}}
 {1+\varepsilon/K_1+R_2/K_{21}}-\delta_1\varepsilon>0,\\
\frac{\alpha_{02}}
 {1+\varepsilon/K_2+R_1/K_{12}}-\delta_2\varepsilon>0.
\end{gather*}
It follows that the vector field points to the right on
$x_1=\varepsilon$ and upward on $x_2=\varepsilon$.  The same
inequalities show that no equilibrium can lie between either
coordinate axis and the corresponding near side, while the upper
bounds used in the proof of Theorem~\ref{thm:bistability} show that no
equilibrium can lie beyond the right or top side.  Thus the rectangle
\[
[\varepsilon,R_1]\times[\varepsilon,R_2]
\]
contains all positive equilibria and the vector field points strictly
inward along its boundary.

The index of the vector field along this boundary is $+1$.  Hence the
sum of the indices of the equilibria inside the rectangle is $+1$.
For a nondegenerate planar equilibrium the index is
$\operatorname{sgn}(\det J)$, so three nondegenerate equilibria must
consist of two equilibria of index $+1$ and one of index $-1$.  Since
\eqref{eq:trace-negative} holds and \eqref{eq:discriminant} rules out
foci, the index-$+1$ equilibria are stable nodes and the index-$-1$
equilibrium is a saddle.  The nullcline argument additionally
identifies the middle equilibrium as the saddle.  See, for example,
\cite{Perko2001} for the planar index theory used here.
\end{remark}

\begin{remark}
The adjective ``global'' refers to the convergence and basin
decomposition \eqref{eq:global-partition}: every positive trajectory
converges to one of the three equilibria, and the only positive initial
conditions that fail to converge to one of the two sinks are those on
the stable manifold of the saddle.  It does not mean that either stable
equilibrium is itself globally asymptotically stable on the entire
positive quadrant.
\end{remark}

\section{The general case}\label{sec:general}

The proof of Theorem~\ref{thm:bistability} used the specific form of
\eqref{eq:system1}--\eqref{eq:system2} in two different ways.  On the
one hand, it used several qualitative features: strong competition, the
fact that both nullclines are decreasing graphs, and the endpoint signs
\eqref{eq:H-end-signs}.  On the other hand, through
Lemma~\ref{lem:polynomial}, it used the algebraic fact that the orders
of vanishing of $H$ are root multiplicities of a polynomial of degree at
most four.  It is natural to ask whether the qualitative features alone
already force the bistable phase portrait whenever there are exactly
three positive equilibria.

The answer is no, and the obstruction comes from degenerate equilibria.
We first isolate the qualitative structure and show that, under it,
three positive equilibria lead to one of two alternatives
(Theorem~\ref{thm:general}): either $H$ changes sign at all three
equilibria, in which case the configuration of
Theorem~\ref{thm:bistability} follows whenever the equilibria are
nondegenerate, or $H$ changes sign at only one of them, in which case at
least two of the three equilibria are degenerate.  We then give an explicit polynomial
example in which the second alternative occurs and the system is not
bistable (Proposition~\ref{prop:example-stability}).  Finally, we return
to \eqref{eq:system1}--\eqref{eq:system2} and explain how
Lemma~\ref{lem:polynomial} excludes the second alternative
(Proposition~\ref{prop:special-case}).

\subsection{Qualitative hypotheses}

In this section, $f$ and $g$ denote arbitrary $C^1$ functions on
$\mathbb R^2_{>0}$, and we consider the system
\[
\dot x_1=f(x_1,x_2),\qquad
\dot x_2=g(x_1,x_2).
\]
As before, the first and second nullclines are the zero sets of $f$ and
$g$, their positive parts are their intersections with
$\mathbb R^2_{>0}$, and $J$ denotes the Jacobian matrix of the vector
field.  An equilibrium is \emph{degenerate} if $\det J=0$ there, and
\emph{nondegenerate} otherwise.  We consider the following hypotheses.

\begin{enumerate}
\item[(A1)] \emph{Strong competition.}  $f_{x_2}<0$ and $g_{x_1}<0$
throughout $\mathbb R^2_{>0}$.

\item[(A2)] \emph{Decreasing first nullcline.}  There are $a>0$ and a
$C^1$ function $h:(0,a)\to(0,\infty)$ with $h'<0$ such that the positive
part of the first nullcline is precisely the graph
\[
x_2=h(x_1),\qquad 0<x_1<a.
\]

\item[(A3)] \emph{Decreasing second nullcline.}  There are $b>0$ and a
$C^1$ function $k:(0,b)\to(0,\infty)$ with $k'<0$ such that the
positive part of the second nullcline is precisely the graph
\[
x_1=k(x_2),\qquad 0<x_2<b,
\]
and such that the image $k((0,b))$ contains the interval $(0,a)$.
\end{enumerate}

For \eqref{eq:system1}--\eqref{eq:system2}, the image $k((0,b))$ is all
of $(0,\infty)$ by Lemma~\ref{lem:nullclines}, since $\alpha_{02}>0$
forces $k(x_2)\to+\infty$ as $x_2\downarrow0$.  The containment
condition in (A3) is a weakening of this property; in the example of
Section~\ref{sec:example} it holds with image $(0,22)$.

Under (A3), $k$ is a strictly decreasing $C^1$ bijection from $(0,b)$
onto its image, with nonvanishing derivative.  We denote by
\[
\ell:(0,a)\longrightarrow(0,b)
\]
the restriction to $(0,a)$ of its inverse.  By the inverse function
theorem, $\ell$ is $C^1$ and strictly decreasing, with
$\ell'=1/(k'\circ\ell)$.  As in Section~2, we set
\begin{equation}\label{eq:H-general}
H(x_1):=h(x_1)-\ell(x_1),\qquad 0<x_1<a.
\end{equation}
We use the same symbols as in Lemma~\ref{lem:nullclines} on purpose: for
\eqref{eq:system1}--\eqref{eq:system2}, the functions $h$ and $k$ are
those of Lemma~\ref{lem:nullclines}, and $\ell$ and $H$ are the
restrictions to $(0,a)$ of \eqref{eq:ell-def} and \eqref{eq:H-def}; see
Proposition~\ref{prop:special-case} below.  The two remaining hypotheses
are the following.

\begin{enumerate}
\item[(A4)] \emph{Endpoint signs.}  There is $\theta\in(0,a/2)$ such that
$H>0$ on $(0,\theta)$ and $H<0$ on $(a-\theta,a)$.

\item[(A5)] \emph{Precompactness.}  Every solution with initial condition
in $\mathbb R^2_{>0}$ is defined for all $t\geq0$, and its positive
semiorbit has compact closure contained in $\mathbb R^2_{>0}$.
\end{enumerate}

Hypotheses (A1)--(A4) are used for the local conclusions, and (A5) only
for the global one.  The first step is to observe that the arguments of
Lemmas~\ref{lem:nullclines} and~\ref{lem:jacobian} used only these
qualitative features.

\begin{lemma}\label{lem:general-structure}
Assume (A1)--(A3).
\begin{enumerate}
\item[(a)] The positive equilibria are in one-to-one correspondence with
the zeros of $H$ in $(0,a)$, via $x_1\mapsto(x_1,h(x_1))$.  Distinct
positive equilibria have distinct $x_1$-coordinates; if they are ordered
by increasing $x_1$, then their $x_2$-coordinates are ordered in the
opposite direction.

\item[(b)] $f_{x_1}<0$ along the positive part of the first nullcline,
and $g_{x_2}<0$ along the positive part of the second nullcline.
Consequently, at every positive equilibrium,
\[
\operatorname{tr}J<0
\qquad\text{and}\qquad
(\operatorname{tr}J)^2-4\det J>0 .
\]

\item[(c)] At every positive equilibrium $(x_1^*,x_2^*)$, identity
\eqref{eq:Hprime-det} holds, and its denominator is positive.
Consequently, a positive equilibrium is degenerate if and only if
$H'(x_1^*)=0$, and in that case the eigenvalues of $J$ are $0$ and
$\operatorname{tr}J<0$.
\end{enumerate}
\end{lemma}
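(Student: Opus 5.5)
The plan is to repeat the arguments of Lemmas~\ref{lem:nullclines} and~\ref{lem:jacobian}, checking at each step that only (A1)--(A3) are used and never the explicit formulas. The one point needing care is where $f$ and $g$ vanish. In the special model the nullclines were obtained by solving explicitly. Here (A2) and (A3) assert directly that the positive zero sets are the stated graphs, so no explicit solving is needed.

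For (a), a point $(x_1,x_2)\in\mathbb R^2_{>0}$ is an equilibrium iff it lies on both positive nullclines. By (A2) this means $0<x_1<a$ and $x_2=h(x_1)$. By (A3) it also means $0<x_2<b$ and $x_1=k(x_2)$. Since $k((0,b))\supset(0,a)$ and $k$ is injective, for $x_1\in(0,a)$ the condition $x_1=k(x_2)$ with $x_2\in(0,b)$ is equivalent to $x_2=\ell(x_1)$. So equilibria correspond exactly to zeros of $H$ via $x_1\mapsto(x_1,h(x_1))$. Because the first nullcline is a graph over $x_1$, distinct equilibria have distinct $x_1$-coordinates. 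Since $h'<0$, the $x_2$-coordinates are ordered oppositely.

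For (b), I differentiate the identity $f(x_1,h(x_1))\equiv0$ on $(0,a)$. This is legitimate because $f$ is $C^1$ and $h$ is $C^1$, and it gives $f_{x_1}=-f_{x_2}h'$. By (A1), $f_{x_2}<0$, and $h'<0$, so $f_{x_1}<0$. Similarly, $g(k(x_2),x_2)\equiv0$ gives $g_{x_2}=-g_{x_1}k'<0$. At an equilibrium both facts hold, so $\operatorname{tr}J<0$. The discriminant identity \eqref{eq:discriminant} is purely algebraic, so $(\operatorname{tr}J)^2-4\det J=(f_{x_1}-g_{x_2})^2+4f_{x_2}g_{x_1}>0$ by (A1).

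For (c), I use $\ell'=1/(k'\circ\ell)$ together with $k'=-g_{x_2}/g_{x_1}$ evaluated on the second nullcline. Here $g_{x_1}\neq0$ by (A1). This gives $\ell'=-g_{x_1}/g_{x_2}$, which is well defined since $g_{x_2}<0$. Combined with $h'=-f_{x_1}/f_{x_2}$, the same algebra as in Lemma~\ref{lem:jacobian} yields \eqref{eq:Hprime-det}. The denominator $f_{x_2}g_{x_2}$ is a product of two negative numbers, hence positive. Therefore $\det J=0$ iff $H'(x_1^*)=0$. In that case the characteristic polynomial is $\lambda^2-(\operatorname{tr}J)\lambda$, so the eigenvalues are $0$ and $\operatorname{tr}J<0$.

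The only mildly delicate step is (a). There I must make sure the equivalence $x_1=k(x_2)\iff x_2=\ell(x_1)$ holds on all of $(0,a)$, which is exactly why the containment condition in (A3) is needed. Everything else is a verbatim transcription of the earlier proofs with only $C^1$ regularity.
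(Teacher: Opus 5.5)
Your proposal is correct and follows essentially the same route as the paper. Part (a) characterizes the zero sets via (A2)--(A3), using the injectivity of $k$ and the containment $k((0,b))\supset(0,a)$. Part (b) differentiates $f(x_1,h(x_1))\equiv0$ and $g(k(x_2),x_2)\equiv0$ and applies (A1). Part (c) writes $h'=-f_{x_1}/f_{x_2}$ and $\ell'=1/(k'\circ\ell)=-g_{x_1}/g_{x_2}$ at the equilibrium, subtracts, and reads the eigenvalues off the characteristic polynomial $\lambda(\lambda-\operatorname{tr}J)$.
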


\begin{proof}
(a) Let $x\in\mathbb R^2_{>0}$.  By (A2), $f(x)=0$ if and only if
$0<x_1<a$ and $x_2=h(x_1)$.  By (A3), $g(x)=0$ if and only if
$0<x_2<b$ and $x_1=k(x_2)$.  If $0<x_1<a$, then $x_1$ lies in the image
of $k$, and since $k$ is injective, the condition $x_1=k(x_2)$ with
$0<x_2<b$ is equivalent to $x_2=\ell(x_1)$.  Hence $x$ is an equilibrium
if and only if $0<x_1<a$ and $x_2=h(x_1)=\ell(x_1)$, that is, if and only
if $x=(x_1,h(x_1))$ with $H(x_1)=0$.  The statements about the
coordinates follow from $h'<0$.

(b) Differentiating $f(x_1,h(x_1))=0$ gives $f_{x_1}+f_{x_2}h'=0$ along
the positive first nullcline, so $f_{x_1}=-f_{x_2}h'<0$ by (A1) and
(A2).  Differentiating $g(k(x_2),x_2)=0$ gives $g_{x_1}k'+g_{x_2}=0$
along the positive second nullcline, so $g_{x_2}=-g_{x_1}k'<0$ by (A1)
and (A3).  At a positive equilibrium both conclusions apply, so
$\operatorname{tr}J<0$.  The computation \eqref{eq:discriminant} used
only $f_{x_2}<0$ and $g_{x_1}<0$, so it applies verbatim.

(c) At a positive equilibrium $x^*=(x_1^*,x_2^*)$ we have
$x_2^*=h(x_1^*)=\ell(x_1^*)$ and $x_1^*=k(x_2^*)$.  By the two identities
in the proof of (b),
\[
h'(x_1^*)=-\frac{f_{x_1}(x^*)}{f_{x_2}(x^*)},
\qquad
\ell'(x_1^*)=\frac{1}{k'(x_2^*)}=-\frac{g_{x_1}(x^*)}{g_{x_2}(x^*)} .
\]
Subtracting gives \eqref{eq:Hprime-det}, exactly as in the proof of
Lemma~\ref{lem:jacobian}.  The denominator $f_{x_2}g_{x_2}$ is positive
by (A1) and (b).  Hence $\det J(x^*)=0$ if and only if $H'(x_1^*)=0$.  In
that case the characteristic polynomial of $J(x^*)$ is
$\lambda(\lambda-\operatorname{tr}J)$, so the eigenvalues are $0$ and
$\operatorname{tr}J<0$.
\end{proof}

\subsection{A dichotomy for three equilibria}

\begin{theorem}\label{thm:general}
Assume (A1)--(A4), and suppose that the system has exactly three
distinct equilibria in $\mathbb R^2_{>0}$.  Label them
$p^{(i)}=(x_1^{(i)},x_2^{(i)})$, $i=1,2,3$, so that
$x_1^{(1)}<x_1^{(2)}<x_1^{(3)}$.  Then
$x_2^{(1)}>x_2^{(2)}>x_2^{(3)}$, and exactly one of the following two
alternatives holds.
\begin{enumerate}
\item[(i)] $H$ changes sign at each of $x_1^{(1)}$, $x_1^{(2)}$,
$x_1^{(3)}$.  In this case
\[
\det J(p^{(1)})\geq0,
\qquad
\det J(p^{(2)})\leq0,
\qquad
\det J(p^{(3)})\geq0.
\]

\item[(ii)] $H$ changes sign, from positive to negative, at exactly one
of $x_1^{(1)}$, $x_1^{(2)}$, $x_1^{(3)}$.  At each of the other two,
$H'=0$ and $\det J=0$, and the eigenvalues of $J$ are $0$ and
$\operatorname{tr}J<0$.  In particular, at least two of the three
equilibria are degenerate, and neither of these two is hyperbolic.
\end{enumerate}
If all three equilibria are nondegenerate, then alternative~(i) holds
with strict inequalities, $p^{(1)}$ and $p^{(3)}$ are hyperbolic
asymptotically stable nodes, and $p^{(2)}$ is a hyperbolic saddle.

If (A5) also holds, then every solution with initial condition in
$\mathbb R^2_{>0}$ converges to one of the three equilibria.  If, in
addition, all three equilibria are nondegenerate, then the disjoint
decomposition \eqref{eq:global-partition} holds, the sets
$\mathcal B_1$ and $\mathcal B_3$ are nonempty open basins of
attraction, and $W^s(p^{(2)})$ is the one-dimensional stable manifold
of the saddle.
\end{theorem}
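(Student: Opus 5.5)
The plan is to reduce everything to a sign analysis of the scalar function $H$ on $(0,a)$ and then transfer that information to $J$ through Lemma~\ref{lem:general-structure}. By Lemma~\ref{lem:general-structure}(a), the three equilibria correspond to exactly three zeros $x_1^{(1)}<x_1^{(2)}<x_1^{(3)}$ of $H$ in $(0,a)$, and the reverse ordering of the $x_2$-coordinates follows from $h'<0$. Since $H$ is continuous with no other zeros, it has constant sign on each of the four open intervals cut out by these zeros. By (A4), the sign on the first interval is $+$ and on the last is $-$. (All zeros lie in $[\theta,a-\theta]$ by (A4), so the end intervals contain $(0,\theta)$ and $(a-\theta,a)$.) The sign pattern across the four intervals is thus $(+,s_1,s_2,-)$, and the number of sign changes is odd: one or three. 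Three changes force the pattern $(+,-,+,-)$, which is alternative (i). One change gives exactly one zero where $H$ crosses, necessarily from $+$ to $-$ since it is the unique change, and at the other two zeros $H$ touches zero without crossing. This is alternative (ii). The two alternatives are clearly mutually exclusive.

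For the determinant signs in (i), observe that at $x_1^{(1)}$ the function $H$ passes from positive to negative. Hence $H'(x_1^{(1)})\le0$, since a positive derivative would make $H$ increasing through the zero. Similarly $H'(x_1^{(2)})\ge0$ and $H'(x_1^{(3)})\le0$. Identity \eqref{eq:Hprime-det} from Lemma~\ref{lem:general-structure}(c), with its positive denominator, converts these into $\det J(p^{(1)})\ge0$, $\det J(p^{(2)})\le0$ and $\det J(p^{(3)})\ge0$. In (ii), at a touching zero $x^*$, $H$ has a local extremum, equal to $0$, at an interior point, and $H$ is $C^1$, so $H'(x^*)=0$. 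Lemma~\ref{lem:general-structure}(c) then gives $\det J=0$ with eigenvalues $0$ and $\operatorname{tr}J<0$. Such an equilibrium is degenerate and not hyperbolic.

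If all three equilibria are nondegenerate, alternative (ii) is excluded, so (i) holds. Nondegeneracy makes the inequalities strict. Then $\operatorname{tr}J<0$ together with the positive discriminant of Lemma~\ref{lem:general-structure}(b) gives real eigenvalues, both negative at $p^{(1)},p^{(3)}$ and of opposite signs at $p^{(2)}$. Linearization yields the stable nodes and the saddle, exactly as in the proof of Theorem~\ref{thm:bistability}.

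For the global part, (A5) and (A1) let us apply Lemma~\ref{lem:competitive-convergence}. The one subtlety is that this lemma is stated on an open rectangle, which $\mathbb R^2_{>0}$ is, with infinite sides; if needed, one restricts to a bounded open rectangle containing the compact closure of the semiorbit inside $\mathbb R^2_{>0}$. Every solution then converges to an equilibrium, hence to one of the three $p^{(i)}$. Under nondegeneracy, the decomposition, the openness and nonemptiness of $\mathcal B_1,\mathcal B_3$, and the stable manifold statement follow verbatim from the end of the proof of Theorem~\ref{thm:bistability}. I expect no real obstacle. The only points needing care are the parity and sign-change bookkeeping for a merely $C^1$ function, where a sign change does not force $H'\neq0$ (hence the non-strict inequalities), and checking that touching zeros have $H'=0$.
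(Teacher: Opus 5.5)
Your proposal is correct and follows the paper's proof essentially step by step: the parity count of sign changes of $H$ over the four intervals, with endpoint signs fixed by (A4); the difference-quotient and Fermat arguments giving $H'\le0$, $H'\ge0$, or $H'=0$; the transfer to $\det J$ via \eqref{eq:Hprime-det}; and the appeal to Lemma~\ref{lem:competitive-convergence} on the open rectangle $\mathbb R^2_{>0}$ for the global part. The only cosmetic difference is in the nondegenerate case: you exclude alternative~(ii) because it forces two degenerate equilibria, whereas the paper observes directly that $H'\neq0$ forces a sign change at every zero.
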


\begin{proof}
By Lemma~\ref{lem:general-structure}(a), the three equilibria have
distinct $x_1$-coordinates, these coordinates are the three zeros of $H$
in $(0,a)$, and the $x_2$-coordinates are ordered as stated.  Let
$x_1^{(0)}:=0$ and $x_1^{(4)}:=a$.  Since $H$ is continuous and has no
zero in the interval $I_j:=(x_1^{(j)},x_1^{(j+1)})$, it has a constant
sign $\sigma_j\in\{+1,-1\}$ on $I_j$, for $j=0,\dots,3$.  By (A4), $H$
has no zeros in $(0,\theta)$ or in $(a-\theta,a)$, so
$(0,\theta)\subset I_0$ and $(a-\theta,a)\subset I_3$.  Hence
$\sigma_0=+1$ and $\sigma_3=-1$.

Now, $H$ changes sign at $x_1^{(j)}$ if and only if
$\sigma_{j-1}\ne\sigma_j$.  Since
\[
(\sigma_0\sigma_1)(\sigma_1\sigma_2)(\sigma_2\sigma_3)=\sigma_0\sigma_3=-1,
\]
and a factor $\sigma_{j-1}\sigma_j$ equals $-1$ exactly when $H$ changes
sign at $x_1^{(j)}$, the number of zeros at which $H$ changes sign is
odd.  Thus $H$ changes sign either at all three zeros or at exactly one,
and the two alternatives are mutually exclusive.

We use two elementary facts.  If $H$ does not change sign at a zero
$x_1^*$, then $x_1^*$ is a local extremum of $H$, so $H'(x_1^*)=0$.  If
$H$ changes sign from positive to negative at $x_1^*$, then the
difference quotients $H(x_1)/(x_1-x_1^*)$ are negative for all $x_1\ne
x_1^*$ near $x_1^*$, so $H'(x_1^*)\leq0$; similarly,
$H'(x_1^*)\geq0$ at a change from negative to positive.

If $H$ changes sign at all three zeros, then
$(\sigma_0,\sigma_1,\sigma_2,\sigma_3)=(+1,-1,+1,-1)$, so
\[
H'(x_1^{(1)})\leq0,
\qquad
H'(x_1^{(2)})\geq0,
\qquad
H'(x_1^{(3)})\leq0.
\]
By Lemma~\ref{lem:general-structure}(c),
$\det J=-f_{x_2}g_{x_2}H'$ at each equilibrium, with
$f_{x_2}g_{x_2}>0$.  This gives the inequalities in~(i).  If $H$ changes
sign at exactly one zero, the change is from positive to negative,
because $\sigma_0=+1$ and $\sigma_3=-1$, and $H'=0$ at the other two
zeros.  The remaining assertions of~(ii) follow from
Lemma~\ref{lem:general-structure}(c).

Suppose now that all three equilibria are nondegenerate.  By
Lemma~\ref{lem:general-structure}(c), $H'\ne0$ at all three zeros, so
$H$ changes sign at each of them.  Thus alternative~(i) holds, and its
inequalities are strict.  By Lemma~\ref{lem:general-structure}(b), at
every positive equilibrium $\operatorname{tr}J<0$ and the eigenvalues
of $J$ are real.  At $p^{(1)}$ and $p^{(3)}$ we have $\det J>0$, so both
eigenvalues are negative; at $p^{(2)}$ we have $\det J<0$, so they have
opposite signs.  By the linearization principle~\cite{mct}, $p^{(1)}$
and $p^{(3)}$ are hyperbolic asymptotically stable nodes and $p^{(2)}$
is a hyperbolic saddle.

Finally, assume (A5).  By (A1), the system is strongly competitive on
the open rectangle $\mathbb R^2_{>0}$, so
Lemma~\ref{lem:competitive-convergence} shows that every positive
solution converges to an equilibrium, which is necessarily one of
$p^{(1)},p^{(2)},p^{(3)}$.  If all three equilibria are nondegenerate,
the remaining assertions follow exactly as in the last paragraph of the
proof of Theorem~\ref{thm:bistability}.
\end{proof}

\begin{remark}\label{rem:general-comments}
Note that alternative~(i) does not by itself
exclude degenerate equilibria.  If $H$ is smooth near a zero and
vanishes there to odd order at least three, then $H$ changes sign at
that zero and $H'=0$ there.  Theorem~\ref{thm:general} makes no
stability claim about such an equilibrium; a higher-order analysis would
be needed.  Observe that alternative~(ii) is not excluded by (A1)--(A4), as
the next example shows.
\end{remark}

\subsection{The degenerate alternative occurs}\label{sec:example}

We consider the following very simple example.  Both
nullclines are graphs of explicit polynomials, and the vector field is
built directly from them.

\begin{proposition}\label{prop:example}
Let
\[
h(x_1):=4-x_1,
\qquad
k(x_2):=4-x_2+\kappa(x_2),
\qquad
\kappa(y):=(2-y)(y-1)^2(y-3)^2,
\]
and consider the polynomial system
\begin{equation}\label{eq:example}
\dot x_1=f(x_1,x_2):=h(x_1)-x_2,
\qquad
\dot x_2=g(x_1,x_2):=k(x_2)-x_1,
\end{equation}
that is, $\dot x_1=4-x_1-x_2$ and $\dot x_2=4-x_1-x_2+\kappa(x_2)$.
\begin{enumerate}
\item[(a)] $\kappa'(y)\leq4/5$, and hence $k'(y)\leq-1/5$, for all real
$y$.

\item[(b)] Hypotheses (A1)--(A4) hold with $a=4$, with $h$ restricted to
$(0,4)$, and with $k$ restricted to $(0,b)$, where $b\in(3,7/2)$ is the
unique zero of $k$.  Moreover, $k((0,b))=(0,22)$.

\item[(c)] For $0<x_1<4$,
\begin{equation}\label{eq:example-H}
H(x_1)=\frac{\kappa(x_1)}{\mu(x_1)},
\qquad
\mu(x_1):=-\int_0^1k'\bigl(\ell(x_1)+sH(x_1)\bigr)\,ds\geq\frac15 ,
\end{equation}
and $\mu$ is $C^\infty$.  Consequently, the zeros of $H$ in $(0,4)$ are
$1$, $2$, and $3$, with orders of vanishing $2$, $1$, and $2$,
respectively; moreover $H>0$ on $(0,1)\cup(1,2)$ and $H<0$ on
$(2,3)\cup(3,4)$.

\item[(d)] The system has exactly three positive equilibria,
\[
p^{(1)}=(1,3),\qquad p^{(2)}=(2,2),\qquad p^{(3)}=(3,1).
\]
Alternative~(ii) of Theorem~\ref{thm:general} holds: $H$ changes sign
only at $x_1^{(2)}=2$, and $p^{(1)}$ and $p^{(3)}$ are degenerate.

\item[(e)] The Jacobian matrices at the equilibria are
\[
J(p^{(1)})=J(p^{(3)})=\begin{pmatrix}-1&-1\\-1&-1\end{pmatrix},
\qquad
J(p^{(2)})=\begin{pmatrix}-1&-1\\-1&-2\end{pmatrix}.
\]
The eigenvalues of the first matrix are $0$ and $-2$, and those of the
second are $(-3\pm\sqrt5)/2<0$.
\end{enumerate}
\end{proposition}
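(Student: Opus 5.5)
The plan is to verify (a)--(e) in order, since each part feeds the next.

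For (a), I will compute $\kappa'(y)$ explicitly. Write $\kappa(y)=(2-y)q(y)^2$ with $q(y)=(y-1)(y-3)=(y-2)^2-1$. Substituting $u=y-2$ gives $\kappa=-u(u^2-1)^2=-(u^5-2u^3+u)$, so
\[
\kappa'=-(5u^4-6u^2+1).
\]
The quadratic $5v^2-6v+1$ in $v=u^2\ge0$ has minimum $-4/5$ at $v=3/5$. Hence $\kappa'\le4/5$, and $k'=-1+\kappa'\le-1/5$.

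For (b), (A1) is immediate: $f_{x_2}=g_{x_1}=-1$. For (A2), the positive first nullcline is $x_2=4-x_1$ with $0<x_1<4$, so $a=4$. For (A3), $k$ is strictly decreasing with $k(0)=4+\kappa(0)=4+18=22$. I then check $k(3)=1>0$ and, with $u=3/2$, $k(7/2)=1/2-(3/2)(5/4)^2<0$, which gives the zero $b\in(3,7/2)$ and image $(0,22)\supset(0,4)$. The positive second nullcline is exactly this graph: any $x_2\ge b$ gives $k(x_2)\le0$, so $x_1$ would not be positive. (A4) will follow from the sign description in (c).

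For (c), the idea is to rewrite the equation $k(\ell(x_1))=x_1$. Set $x_2=h(x_1)=4-x_1$. Then
\[
k(h(x_1))=4-(4-x_1)+\kappa(4-x_1)=x_1+\kappa(4-x_1).
\]
Since $\kappa(4-y)=-(2-y)\,(3-y)^2(1-y)^2\cdot(-1)\cdot(-1)$, checking signs should give $\kappa(4-y)=\kappa(y)$. (With $u\mapsto-u$, $\kappa=-u(u^2-1)^2$ is odd in $u$; so I must recheck whether it is actually $-\kappa(y)$.) Using the mean value theorem,
\[
k(h)-k(\ell)=H\int_0^1 k'(\ell+sH)\,ds=-\mu H,
\]
so $-\mu H=k(h)-x_1=\kappa(4-x_1)$. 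The sign conventions should work out to $H=\kappa(x_1)/\mu$. Here $\mu\ge1/5$ by (a), and $\mu$ is smooth because $\ell$ is smooth, since $k'\ne0$. The zeros of $H$, their orders, and its signs then follow from the factorization $\kappa=(2-y)(y-1)^2(y-3)^2$. This sign bookkeeping is the step I expect to be fiddliest.

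For (d), Lemma~\ref{lem:general-structure}(a) gives equilibria $(x_1,4-x_1)$ at $x_1=1,2,3$. Since $H$ changes sign only at $2$, and $H'=0$ at $1$ and $3$, the points $p^{(1)}$ and $p^{(3)}$ are degenerate by part (c) of that lemma.

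For (e), the Jacobian is
\[
J=\begin{pmatrix}-1&-1\\-1&-1+\kappa'(x_2)\end{pmatrix}.
\]
At $x_2=3$ and $x_2=1$ we have $u=\pm1$, so $\kappa'=-(5-6+1)=0$. At $x_2=2$ we have $u=0$, so $\kappa'=-1$. The eigenvalues then come from the trace and determinant: for $J(p^{(1)})$ the trace is $-2$ and the determinant is $0$, giving $0$ and $-2$. For $J(p^{(2)})$ the trace is $-3$ and the determinant is $1$, giving $(-3\pm\sqrt5)/2$.
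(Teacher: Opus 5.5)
Your plan follows the paper's proof part by part, and parts (a), (b), (d) and (e) check out. You use the substitution $u=y-2$ with the quadratic bound in $u^2$ for (a), and the values $k(0)=22$, $k(3)=1$, $k(7/2)=1/2-75/32<0$ for (b). For (c) you apply the integral mean value theorem to $k(h(x_1))-k(\ell(x_1))$. Lemma~\ref{lem:general-structure} handles (d), and for (e) you use $\kappa'=0$ at $u=\pm1$ and $\kappa'=-1$ at $u=0$.

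The gap is the sign identity in (c), which is left unproved. As written, you first assert the wrong version, $\kappa(4-y)=\kappa(y)$, and then leave it to ``recheck''. It is settled in one line. Since $(4-y)-2=-(y-2)$, the map $y\mapsto 4-y$ is $u\mapsto -u$, and $\kappa=-u(u^2-1)^2$ is odd in $u$, so $\kappa(4-y)=-\kappa(y)$. Your own displayed expression already equals $-\kappa(y)$, because $(3-y)^2=(y-3)^2$, $(1-y)^2=(y-1)^2$ and $(-1)\cdot(-1)=1$. Hence $k(h(x_1))=x_1-\kappa(x_1)$. Combined with $k(\ell(x_1))=x_1$ and $k(h)-k(\ell)=-\mu H$, this gives $-\mu H=-\kappa(x_1)$, that is, $H=\kappa/\mu$, as in the paper.

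This sign matters. With the other sign you would get $H=-\kappa/\mu$, so $H<0$ near $0$ and $H>0$ near $4$. That contradicts (A4), which you defer to (c) in part (b). It also reverses the positive-to-negative sign change at $x_1=2$ that alternative~(ii) requires in (d). Once this line is written out, your argument coincides with the paper's.
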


\begin{proof}
(a) Let $t:=y-2$.  Then $2-y=-t$ and $(y-1)(y-3)=t^2-1$, so
$\kappa(y)=-t(t^2-1)^2$.  Differentiating,
\[
\kappa'(y)=-(t^2-1)^2-4t^2(t^2-1)
=-(t^2-1)(5t^2-1)
=\frac45-5\Bigl(t^2-\frac35\Bigr)^2\leq\frac45 .
\]
Hence $k'(y)=-1+\kappa'(y)\leq-1/5$.

(b) Hypothesis (A1) holds because $f_{x_2}=g_{x_1}=-1$.  For
$x\in\mathbb R^2_{>0}$, $f(x)=0$ if and only if $x_2=4-x_1$, and then
$x_2>0$ if and only if $x_1<4$.  Thus (A2) holds with $a=4$, since $h$
maps $(0,4)$ into $(0,4)$ and $h'=-1$.  Next, $g(x)=0$ if and only if
$x_1=k(x_2)$.  By (a), $k$ is strictly decreasing on $\mathbb R$.  Since
$k(0)=4+\kappa(0)=22$, $k(3)=1$, and
$k(7/2)=1/2+\kappa(7/2)=1/2-75/32<0$, the function $k$ has a unique zero
$b$, which lies in $(3,7/2)$.  For $x_2>0$ we have $k(x_2)>0$ if and
only if $x_2<b$.  Hence the positive part of the second nullcline is
the graph of $k$ over $(0,b)$, and $k((0,b))=(0,22)\supset(0,4)$, which
is (A3).  Hypothesis (A4) follows from the sign information in~(c).

(c) Replacing $t$ by $-t$ shows that $\kappa(4-y)=-\kappa(y)$.  For
$0<x_1<4$ we therefore have
\[
k(h(x_1))=4-(4-x_1)+\kappa(4-x_1)=x_1-\kappa(x_1),
\qquad
k(\ell(x_1))=x_1 .
\]
Since $k$ is a polynomial, the fundamental theorem of calculus gives
\[
-\kappa(x_1)=k(h(x_1))-k(\ell(x_1))
=H(x_1)\int_0^1k'\bigl(\ell(x_1)+sH(x_1)\bigr)\,ds
=-\mu(x_1)H(x_1).
\]
By (a), $\mu\geq1/5$.  Moreover, $\ell$ is $C^\infty$ by the inverse
function theorem, since $k$ is a polynomial with $k'\ne0$; hence $H$ and
$\mu$ are $C^\infty$.  This proves
\eqref{eq:example-H}.  Since
$\kappa(x_1)=(2-x_1)(x_1-1)^2(x_1-3)^2$ and $\mu$ is smooth and
positive, the zeros, orders of vanishing, and signs of $H$ are those of
$\kappa$.

(d) By Lemma~\ref{lem:general-structure}(a) and~(c), the positive
equilibria are the points $(x_1,4-x_1)$ with $x_1\in\{1,2,3\}$.  By~(c),
$H$ changes sign at $2$ but not at $1$ or $3$.  Hence alternative~(ii)
holds, and $p^{(1)}$ and $p^{(3)}$ are degenerate by
Theorem~\ref{thm:general}.

(e) We have
\[
J(x)=\begin{pmatrix}-1&-1\\-1&-1+\kappa'(x_2)\end{pmatrix}.
\]
By the formula in~(a), at $y=1$ and $y=3$ we have $t^2=1$, so
$\kappa'=0$, while $\kappa'(2)=-1$.  This gives the three matrices.  Their
characteristic polynomials are $\lambda^2+2\lambda$ and
$\lambda^2+3\lambda+1$, and $\sqrt5<3$.
\end{proof}

The system \eqref{eq:example} has a symmetry that exchanges the two
outer equilibria.

\begin{lemma}\label{lem:example-symmetry}
Let $\rho(x):=(4-x_1,4-x_2)$.  Then $f\circ\rho=-f$ and $g\circ\rho=-g$
on $\mathbb R^2$.  Consequently, if $x(t)$ is a solution of
\eqref{eq:example}, then so is $\rho(x(t))$, on the same interval of
existence.  The map $\rho$ is an isometry, and it interchanges
$p^{(1)}$ and $p^{(3)}$.
\end{lemma}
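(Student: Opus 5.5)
The plan is a direct verification from the explicit formulas $f(x_1,x_2)=4-x_1-x_2$ and $g(x_1,x_2)=4-x_1-x_2+\kappa(x_2)$, with the only nontrivial input being the antisymmetry $\kappa(4-y)=-\kappa(y)$ already established in the proof of Proposition~\ref{prop:example}(c). First compute $f\circ\rho$: we have
\[
f(4-x_1,4-x_2)=4-(4-x_1)-(4-x_2)=x_1+x_2-4=-f(x_1,x_2).
\]
Next compute $g\circ\rho$: the linear part transforms the same way, and $\kappa(4-x_2)=-\kappa(x_2)$, so
\[
g(4-x_1,4-x_2)=x_1+x_2-4-\kappa(x_2)=-g(x_1,x_2).
\]
Both identities hold on all of $\mathbb R^2$, since the system is polynomial.

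For the statement about solutions, set $y(t):=\rho(x(t))$. Since $D\rho=-I$, we get $\dot y(t)=-\dot x(t)=-(f,g)(x(t))$, and by the identities just proved this equals $(f,g)(\rho(x(t)))=(f,g)(y(t))$. So $y$ solves \eqref{eq:example} on the same interval on which $x$ is defined. Because $\rho$ is an involution, the same argument applied to $y$ shows that maximal intervals correspond exactly, which gives ``on the same interval of existence.''

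Finally, $\rho$ is the point reflection $x\mapsto(4,4)-x$ through $(2,2)$, so $|\rho(x)-\rho(z)|=|x-z|$ and $\rho$ is an isometry. Evaluating gives $\rho(1,3)=(3,1)$ and $\rho(3,1)=(1,3)$, while $p^{(2)}=(2,2)$ is fixed.

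The only step requiring care is the antisymmetry of $\kappa$. Writing $t=y-2$ gives $\kappa(y)=-t(t^2-1)^2$, and $y\mapsto 4-y$ corresponds to $t\mapsto -t$; this function is odd in $t$, so the antisymmetry follows, as already noted in the proof of Proposition~\ref{prop:example}. Everything else is routine.
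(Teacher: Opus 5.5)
Your proposal is correct and follows essentially the same route as the paper: direct substitution using $\kappa(4-y)=-\kappa(y)$, the chain rule with $D\rho=-I$, and the involution $\rho\circ\rho=\mathrm{id}$ to match maximal intervals of existence. You also spell out the isometry (point reflection through $(2,2)$) and the interchange $\rho(1,3)=(3,1)$, which the paper simply calls clear.
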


\begin{proof}
We have $f(\rho(x))=h(4-x_1)-(4-x_2)=x_1+x_2-4=-f(x)$.  Using
$\kappa(4-y)=-\kappa(y)$,
\[
g(\rho(x))=k(4-x_2)-(4-x_1)=x_2-\kappa(x_2)-4+x_1=-g(x).
\]
If $x(t)$ is a solution, then
\[
\frac{d}{dt}\rho(x(t))=-\dot x(t)
=-\bigl(f(x(t)),g(x(t))\bigr)
=\bigl(f(\rho(x(t))),g(\rho(x(t)))\bigr),
\]
so $\rho(x(t))$ is a solution.  Since $\rho\circ\rho$ is the identity,
maximal intervals of existence correspond.  The last statement is
clear.
\end{proof}

Following~\cite[Definition~5.8.6]{mct}, an equilibrium $p$ is
\emph{unstable} if there is $\varepsilon>0$ such that, for every
$\delta>0$, some solution with $|x(0)-p|<\delta$ satisfies
$|x(T)-p|\geq\varepsilon$ at some time $T\geq0$ at which it is defined;
here $|\cdot|$ is the Euclidean norm.  An unstable equilibrium is, in
particular, not stable in the sense of Lyapunov.

\begin{proposition}\label{prop:example-stability}
For \eqref{eq:example}, the middle equilibrium $p^{(2)}=(2,2)$ is a
hyperbolic asymptotically stable node, whereas the outer equilibria
$p^{(1)}=(1,3)$ and $p^{(3)}=(3,1)$ are unstable.  In particular,
\eqref{eq:example} has exactly three positive equilibria but only one
stable equilibrium, so it is not bistable.
\end{proposition}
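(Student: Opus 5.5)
For the middle equilibrium, Proposition~\ref{prop:example}(e) gives $J(p^{(2)})$ with eigenvalues $(-3\pm\sqrt5)/2<0$. Hence $p^{(2)}$ is hyperbolic and asymptotically stable by the linearization principle~\cite{mct}, and it is a node because both eigenvalues are real and distinct. The substance of the proof is the instability of $p^{(1)}$. By Lemma~\ref{lem:example-symmetry}, $\rho$ is an isometry mapping solutions to solutions and exchanging $p^{(1)}$ and $p^{(3)}$, so instability of $p^{(1)}$ transfers directly to $p^{(3)}$.

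To show $p^{(1)}=(1,3)$ is unstable, I would use the geometry of the nullclines near $x_1=1$. The first nullcline is the line $x_1+x_2=4$. On the segment $L:=\{(x_1,4-x_1)\}$ we have $f=0$ and
\[
g=k(4-x_1)-x_1=-\kappa(x_1)=-(2-x_1)(x_1-1)^2(x_1-3)^2 .
\]
Thus for $x_1\in(1-\eta,1)\cup(1,1+\eta)$ with $\eta$ small, $g<0$ on $L$, and the flow crosses $L$ strictly downward. Away from the center direction, the transverse direction $x_1+x_2$ is attracting. Indeed, $s:=x_1+x_2-4$ satisfies $\dot s=2(4-x_1-x_2)+\kappa(x_2)=-2s+\kappa(x_2)$, and $\kappa(x_2)=O((x_2-3)^2)$ near $x_2=3$.

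The cleanest route is to exhibit an explicit solution that leaves. I would work in the coordinates $s$ and $u:=x_1-1$, so that $\dot u=-s$. Near $p^{(1)}$ we have $\kappa(x_2)\approx -(x_2-3)^2\cdot 1\cdot 4\cdot$ (sign check needed). At $y$ near $3$, $(2-y)\approx -1$, $(y-1)^2\approx 4$, so $\kappa\approx-4(y-3)^2\le 0$. Hence $\dot s\le -2s$ up to this nonpositive term. Consider the region $D:=\{s<0,\ u>0\}$ intersected with a small ball. There $\dot u=-s>0$, so $u$ increases, which moves $x_1$ away from $1$ toward $2$. Moreover, on the boundary $s=0$ we have $\dot s=\kappa(x_2)<0$ for $x_2\neq3$, so $s$ stays negative. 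Consequently any solution starting in $D$ arbitrarily close to $p^{(1)}$ keeps $s<0$ and $u$ increasing as long as it stays near $p^{(1)}$.

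It remains to show $u$ actually grows to a fixed size $\varepsilon$ rather than stalling, and this is the main obstacle, since the center direction is only quadratically unstable. I would argue by contradiction with a Lyapunov-type estimate. If the solution stayed in the $\varepsilon$-ball forever, then $u$ would be increasing and bounded, so it converges. Also $s$ is negative and bounded, and by the $\omega$-limit argument, or directly from Lemma~\ref{lem:competitive-convergence} applied to the strongly competitive system, the solution converges to an equilibrium within the ball. The only candidate is $p^{(1)}$. But that requires $u\to0$, which is impossible because $u(t)\ge u(0)>0$. Hence the solution exits the ball of radius $\varepsilon$, giving instability per the definition from~\cite{mct}. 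Since $\varepsilon$ must be fixed independent of $\delta$, I will choose it small enough that $p^{(1)}$ is the only equilibrium in the closed ball, that $\kappa(x_2)<0$ for $x_2\neq3$ there, and that the closed ball lies in $\mathbb R^2_{>0}$.

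Finally, since $p^{(1)}$ and $p^{(3)}$ are unstable and $p^{(2)}$ is the only remaining positive equilibrium, there is exactly one stable equilibrium, so the system is not bistable.
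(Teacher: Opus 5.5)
Your proof is correct, but it proves instability of the outer equilibria by a different mechanism than the paper.

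The paper works at $p^{(3)}=(3,1)$ and builds a Lyapunov-type instability certificate. With $w=x_1+x_2-4$ and $\Psi=(x_2-1)-\tfrac12 w-\tfrac14 w^2$, it proves $\dot\Psi\ge c\Psi^2$ on a disk. It then composes with the flat function $e^{-1/u}$ to obtain $V$ with $\dot V-cV\ge0$, which is the hypothesis of the cited instability theorem. The symmetry $\rho$ then transfers the conclusion to $p^{(1)}$.

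You work directly at $p^{(1)}$ in the coordinates $u=x_1-1$, $s=x_1+x_2-4$. There $\dot u=-s$ and $\dot s=-2s+\kappa(x_2)$, with $\kappa(x_2)\le0$ for $x_2>2$. The wedge $\{s<0,\ u>0\}$ is forward invariant as long as the solution stays in the ball. You should state explicitly why: the only point of $\{s=0\}$ in the ball with $\kappa(x_2)=0$ is $p^{(1)}$ itself, which cannot be reached in finite time. In the wedge $u$ is strictly increasing. A solution confined to the ball would therefore converge, by Lemma~\ref{lem:competitive-convergence}, to the only equilibrium in the closed ball, namely $p^{(1)}$. That contradicts $u(t)\ge u(0)>0$.

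What your route buys is that no growth rate is needed. The paper's flat-function device exists only to turn the weak quadratic growth along the center direction into the linear inequality the theorem demands. You replace that estimate with monotonicity plus compactness, and you show the saddle-node geometry directly: every solution starting in the wedge near $p^{(1)}$ leaves the ball.

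The paper's argument, in turn, is a purely local certificate that needs no convergence theorem. You can make yours equally self-contained with an $\omega$-limit argument instead of the competitive convergence lemma. On the $\omega$-limit set $u$ is constant, so $s\equiv0$ there, so $\kappa(x_2)\equiv0$. This forces the single point $p^{(1)}$, hence $u\to0$, again contradicting $u(t)\ge u(0)>0$.
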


\begin{proof}
The assertion about $p^{(2)}$ follows from
Proposition~\ref{prop:example}(e) and the linearization principle.

\emph{Instability of $p^{(3)}=(3,1)$.}  We use the instability theorem
in~\cite{mct} (Theorem~20 there).  It requires a $C^1$ function $V$ on a
ball $\mathcal B$ centered at the equilibrium and a constant $c>0$ such
that $\dot V-cV\geq0$ on $\mathcal B$, where $\dot V$ denotes the
derivative of $V$ along solutions, and such that the equilibrium lies in
the closure of $\{V>0\}$.

Let $w(x):=x_1+x_2-4$, so that $f=-w$ and $g=-w+\kappa(x_2)$, and define
\[
\Psi(x):=(x_2-1)-\tfrac12w(x)-\tfrac14w(x)^2 .
\]
Then $\nabla\Psi=\bigl(-\tfrac12(1+w),\ \tfrac12(1-w)\bigr)$, and the
derivative of $\Psi$ along solutions is
\[
\dot\Psi
=\tfrac12(1+w)w+\tfrac12(1-w)\bigl(-w+\kappa(x_2)\bigr)
=w^2+\tfrac12(1-w)\kappa(x_2).
\]
The function $\Psi$ itself does not satisfy the hypothesis of the
theorem.  Indeed, at the points $(3-s,1+s)$ we have $\Psi=s$ and
$\dot\Psi=\tfrac12s^2(1-s)(2-s)^2$.  Thus, along this curve,
\[
\frac{\dot\Psi}{\Psi}\longrightarrow0
\qquad\text{as }s\downarrow0,
\]
so no inequality of the form $\dot\Psi-c\Psi\geq0$ with $c>0$ can hold
in a neighborhood of $(3,1)$.  We therefore compose $\Psi$ with a flat
function; this converts the quadratic lower bound
\eqref{eq:Psi-quadratic} for $\dot\Psi$, established below, into the
linear lower bound for $\dot V$ in terms of $V$ required by the
theorem.

Let $\mathcal B$ be the open disk of radius $1/2$ centered at $(3,1)$,
and write $\xi:=x_2-1$.  For $x\in\mathcal B$ we have $|\xi|<1/2$ and
$|w|=|(x_1-3)+\xi|\leq\sqrt2\,|x-(3,1)|<3/4$.  Hence $1-w>1/4$ and
\[
\kappa(x_2)=\xi^2(1-\xi)(2-\xi)^2\geq\tfrac98\,\xi^2,
\]
so that $\dot\Psi\geq w^2+\tfrac{9}{64}\xi^2$ on $\mathcal B$.  On the
other hand, $w^2/4\leq\tfrac{3}{16}|w|$ on $\mathcal B$, so
$|\Psi|\leq|\xi|+\tfrac{11}{16}|w|$ and therefore $\Psi^2\leq2\xi^2+w^2$.
Consequently,
\begin{equation}\label{eq:Psi-quadratic}
\dot\Psi\geq c\,\Psi^2\quad\text{on }\mathcal B,
\qquad c:=\frac{9}{128}.
\end{equation}

Now let $\psi(u):=e^{-1/u}$ for $u>0$ and $\psi(u):=0$ for $u\leq0$.  This
is a $C^\infty$ function on $\mathbb R$, with $\psi'(u)=\psi(u)/u^2$ for
$u>0$ and $\psi'(u)=0$ for $u\leq0$.  Set $V:=\psi\circ\Psi$, which is
$C^\infty$.  At points of $\mathcal B$ where $\Psi>0$,
\eqref{eq:Psi-quadratic} gives
\[
\dot V-cV=\psi(\Psi)\Bigl(\frac{\dot\Psi}{\Psi^2}-c\Bigr)\geq0,
\]
while at points where $\Psi\leq0$ both $V$ and $\dot V$ vanish.  Thus
$\dot V-cV\geq0$ on $\mathcal B$.  Finally, $V(3-s,1+s)=e^{-1/s}>0$ for
all sufficiently small $s>0$, and these points lie in $\mathcal B$ and
approach $(3,1)$, so $(3,1)$ lies in the closure of $\{V>0\}$.  By Theorem~20
of~\cite{mct}, the equilibrium $(3,1)$ is unstable.

\emph{Instability of $p^{(1)}=(1,3)$.}  Let $\varepsilon>0$ be as in the
definition of instability of $(3,1)$, and let $\delta>0$.  There are a
solution $x(t)$ with $|x(0)-(3,1)|<\delta$ and a time $T\geq0$ at which
it is defined such that $|x(T)-(3,1)|\geq\varepsilon$.  With $\rho$ as in
Lemma~\ref{lem:example-symmetry}, $\rho(x(t))$ is a solution, defined at
time $T$, with
\[
|\rho(x(0))-(1,3)|=|x(0)-(3,1)|<\delta,
\qquad
|\rho(x(T))-(1,3)|=|x(T)-(3,1)|\geq\varepsilon .
\]
Hence $(1,3)$ is unstable.
\end{proof}

\begin{remark}\label{rem:example-comments}
In \eqref{eq:example}, the outer equilibria, which in the setting of
Theorem~\ref{thm:bistability} would be asymptotically stable nodes, are
degenerate and unstable, and the unique stable equilibrium is the middle
one.  This failure is not caused by unbounded solutions.  Indeed, on the
four sides of the box $Q:=[0,9/2]\times[0,7/2]$, which contains the
three equilibria, the vector field points strictly into $Q$:
\begin{align*}
f&=4-x_2\geq\tfrac12 \text{ on } x_1=0,
&
f&=-\tfrac12-x_2<0 \text{ on } x_1=\tfrac92,\\
g&=22-x_1\geq\tfrac{35}{2} \text{ on } x_2=0,
&
g&=-\tfrac{59}{32}-x_1<0 \text{ on } x_2=\tfrac72 .
\end{align*}
Consequently $Q$ is forward invariant, that is, every solution with
$x(0)\in Q$ satisfies $x(t)\in Q$ for all $t\geq0$ in its interval of
existence (and, $Q$ being compact, this interval contains $[0,\infty)$).
Indeed, suppose that $x(0)\in Q$ but $x(t_1)\notin Q$ for some $t_1>0$,
and let
\[
\tau:=\sup\bigl\{t\in[0,t_1]:\ x(s)\in Q\ \text{for all } s\in[0,t]\bigr\}.
\]
Then $x(\tau)\in Q$ because $Q$ is closed, and hence $\tau<t_1$.  Each
coordinate $x_i(\tau)$ either lies in the interior of the corresponding
interval, $[0,9/2]$ or $[0,7/2]$, or is one of its endpoints.  In the
latter case, the strict inequalities above show that $\dot x_i(\tau)$
points into that interval.  By continuity, there is $\delta>0$ such that
$x(t)\in Q$ for all $t\in[\tau,\tau+\delta)$, which contradicts the
definition of $\tau$.  On the
other hand, (A5) does not hold on all of $\mathbb R^2_{>0}$.  Indeed,
$f(0,5)=-1<0$, so the solution through $(0,5)$ satisfies $x_1(T)<0$ for
some small $T>0$.  By continuous dependence on initial conditions, the
same holds for the solutions starting at $(\varepsilon,5)$ with
$\varepsilon>0$ small enough, and these solutions leave
$\mathbb R^2_{>0}$.  This plays no role, since instability is a local
property.
\end{remark}

\subsection{The two-gene system as a special case}

We now return to \eqref{eq:system1}--\eqref{eq:system2} and explain
Theorem~\ref{thm:bistability} in terms of Theorem~\ref{thm:general}.

\begin{proposition}\label{prop:special-case}
The system \eqref{eq:system1}--\eqref{eq:system2} satisfies (A1)--(A5),
with $h$ and $k$ as in Lemma~\ref{lem:nullclines}; the functions $\ell$
and $H$ of \eqref{eq:H-general} are the restrictions to $(0,a)$ of
\eqref{eq:ell-def} and \eqref{eq:H-def}.  If the system has exactly
three positive equilibria, then alternative~(ii) of
Theorem~\ref{thm:general} cannot occur, and all three equilibria are
nondegenerate.  Consequently, Theorem~\ref{thm:bistability} is a special
case of Theorem~\ref{thm:general}.
\end{proposition}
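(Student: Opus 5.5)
We verify the hypotheses (A1)--(A5) one at a time, mostly by citing results already proved. Then we use the degree bound of Lemma~\ref{lem:polynomial} to rule out alternative~(ii).

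\emph{(A1)} is \eqref{eq:offdiagonal}, which comes from direct differentiation of \eqref{eq:system1}--\eqref{eq:system2}.

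\emph{(A2) and (A3).} These are the content of Lemma~\ref{lem:nullclines}. The first nullcline is the graph of $h$ over $(0,a)$, where $h>0$ there because $h$ decreases to its zero $a$, and $h'<0$ by \eqref{eq:h-prime}. The second nullcline is the graph of $k$ over $(0,b)$ with $k'<0$. Its image $k((0,b))=(0,\infty)$ certainly contains $(0,a)$. The inverse of $k$ restricted to $(0,a)$ is then the restriction of the $\ell$ of \eqref{eq:ell-def}, so the two definitions of $H$ agree on $(0,a)$.

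\emph{(A4).} By \eqref{eq:H-end-signs}, $H\to+\infty$ at $0^+$ and $H\to-\ell(a)<0$ at $a^-$. By continuity we can choose $\theta\in(0,a/2)$ small enough that $H>0$ on $(0,\theta)$ and $H<0$ on $(a-\theta,a)$.

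\emph{(A5).} This is exactly what the proof of Theorem~\ref{thm:bistability} establishes, independently of the number of equilibria. It shows forward invariance of the open quadrant, the upper bounds \eqref{eq:upper-bounds} (which give global existence), and the lower bounds \eqref{eq:lower-bound1}--\eqref{eq:lower-bound2}. Together these say every positive semiorbit has compact closure in $\mathbb R^2_{>0}$. We simply cite that part of the proof.

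\emph{Excluding alternative~(ii).} Suppose there are exactly three positive equilibria. By Lemma~\ref{lem:polynomial}, each zero $x_1^{(i)}$ of $H$ has an order of vanishing $m_i$, namely its multiplicity as a root of the nonzero polynomial $P$, which has degree at most four. Hence $m_1+m_2+m_3\le 4$, and since each $m_i\ge1$, at most one $m_i$ exceeds $1$. Under alternative~(ii), $H'=0$ at two of the zeros, so two of the $m_i$ would be at least $2$, giving a sum of at least $5$, a contradiction. So alternative~(i) holds.

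We also need nondegeneracy, since alternative~(i) alone still allows a zero of odd order at least three (Remark~\ref{rem:general-comments}). Here the order-four bound forces $m_i\le 2$ for every $i$. An order-$2$ zero would be an even-order zero, where $H$ does not change sign, contradicting alternative~(i). Hence all $m_i=1$. By Lemma~\ref{lem:jacobian} (or Lemma~\ref{lem:general-structure}(c)), $H'(x_1^{(i)})\neq0$ and $\det J\ne0$ at each equilibrium. This recovers the argument of Proposition~\ref{prop:number}.

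\emph{Conclusion.} Theorem~\ref{thm:general}, now applied with nondegenerate equilibria and (A5), yields every conclusion of Theorem~\ref{thm:bistability}.

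Essentially nothing here is hard. The only point requiring care is remembering that alternative~(i) by itself does not give nondegeneracy; we obtain it from the multiplicity count rather than from the sign pattern.
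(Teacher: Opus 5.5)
Your proposal is correct and follows the paper's proof essentially step for step. You verify (A1)--(A5) by citing \eqref{eq:offdiagonal}, Lemma~\ref{lem:nullclines}, \eqref{eq:H-end-signs}, and the bounds from the proof of Theorem~\ref{thm:bistability}, and you then use the degree bound $m_1+m_2+m_3\le4$ from Lemma~\ref{lem:polynomial} to exclude alternative~(ii) and force $m_1=m_2=m_3=1$. The only cosmetic difference is that you get $m_i\ge2$ in alternative~(ii) from $H'=0$ via \eqref{eq:H-order}, whereas the paper uses that a zero without a sign change has even order; both rest on the same factorization.
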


\begin{proof}
Hypothesis (A1) is \eqref{eq:offdiagonal}.  Hypotheses (A2) and (A3)
are contained in Lemma~\ref{lem:nullclines}; in particular, the image of
$k$ is all of $(0,\infty)$, which contains $(0,a)$.  Hypothesis (A4)
follows from \eqref{eq:H-end-signs}.  Hypothesis (A5) was established in
the proof of Theorem~\ref{thm:bistability}, through
\eqref{eq:upper-bounds}, \eqref{eq:lower-bound1}, and
\eqref{eq:lower-bound2}.

Suppose that there are exactly three positive equilibria.  By
Lemma~\ref{lem:polynomial}, the orders of vanishing $m_1,m_2,m_3$ of $H$
at the three zeros are the multiplicities of three distinct roots of a
nonzero polynomial of degree at most four, so $m_1+m_2+m_3\leq4$.
Moreover, $H$ changes sign at a zero if and only if its order of
vanishing is odd.  In alternative~(ii), two of the orders would be even,
hence at least two, and the third would be at least one, giving
$m_1+m_2+m_3\geq5$.  This is impossible, so alternative~(i) holds.  In
alternative~(i), all three orders are odd and their sum is at most four,
so $m_1=m_2=m_3=1$.  By Lemma~\ref{lem:jacobian}, all three equilibria
are nondegenerate.  The conclusions of Theorem~\ref{thm:bistability}
now follow from Theorem~\ref{thm:general}.
\end{proof}

\begin{remark}\label{rem:degree-bound}
For the present model, Lemma~\ref{lem:polynomial} is thus what
separates \eqref{eq:system1}--\eqref{eq:system2} from the example
\eqref{eq:example}; other additional hypotheses might exclude the
degenerate alternative as well.  At a zero where $H$ does not change sign, the order
of vanishing is even, hence at least two, so alternative~(ii) requires a
total order of vanishing of at least $2+1+2=5$.  For \eqref{eq:example},
formula \eqref{eq:example-H} shows that the orders are $(2,1,2)$, with
total exactly five.  For \eqref{eq:system1}--\eqref{eq:system2}, the
total is at most four.  With cooperative binding modeled by integer
Hill exponents larger than one, the equilibrium condition still reduces
to a polynomial equation, but of higher degree, and this degree
obstruction disappears; for noninteger Hill exponents, the equilibrium
condition is not polynomial.  We do not claim that the degenerate alternative is then
attained.  In any event, for such models Theorem~\ref{thm:general}
still applies whenever (A1)--(A4) can be verified.  Three positive
equilibria then yield the configuration of stable node, saddle, and
stable node as soon as nondegeneracy can be established, and global
bistability if (A5) holds as well.
\end{remark}

\subsubsection*{Acknowledgment}

The author worked closely with several AI assistants to brainstorm
ideas, search the literature, develop proofs, check arguments, and write
and edit the paper.  The author verified all mathematical statements and
proofs and takes full responsibility for the content.

\end{document}